\newtheorem{thm}{Theorem}[section]
\newtheorem{lem}[thm]{Lemma}
\newtheorem{cor}[thm]{Corollary}
\newtheorem{pb}[thm]{Problem}
\newtheorem{remark}[thm]{Remark}
\begin{document}

\setlength{\baselineskip}{20pt}
\underline{}\begin{center}{\Large \bf Components of domino tilings under flips in quadriculated cylinder and torus}\\
\vspace{4mm}
{Qianqian Liu, Jingfeng Wang, Chunmei Li, Heping Zhang\footnote{The corresponding author.\\ \indent\indent\textsf{E-mails: liuqq2016@lzu.edu.cn, wangjingfeng2@huawei.con, 283756111@qq.con, zhanghp@lzu.edu.cn.}}}
\vspace{2mm}

{ School of Mathematics and Statistics, Lanzhou University, Lanzhou, Gansu 730000,\\ P. R. China}\\
\end{center}
\begin{abstract} In a region $R$ consisting of unit squares, a domino is the union of two adjacent squares and a (domino) tiling is a collection of dominoes with disjoint interior whose union is the region. The flip graph $\mathcal{T}(R)$ is defined on the set of all tilings of $R$ such that two tilings are adjacent  if we change one to another by a flip (a $90^{\circ}$ rotation of a pair of side-by-side dominoes). It is well-known that  $\mathcal{T}(R)$ is connected when $R$ is simply connected. By using graph theoretical approach, we show that the flip graph of $2m\times(2n+1)$ quadriculated cylinder  is still connected, but  the flip graph of $2m\times(2n+1)$ quadriculated torus is disconnected and consists of exactly two isomorphic components.
For a tiling $t$, we associate an integer $f(t)$, forcing number,  as the minimum number of dominoes in $t$ that is contained in no other tilings. As an application, we obtain that the forcing numbers of all tilings in $2m\times (2n+1)$ quadriculated cylinder and torus form respectively an integer interval whose maximum value is $(n+1)m$.\\

\noindent{\bf Keywords}: Domino; Tiling; Flip; Perfect matching; Resonance graph; Forcing spectrum
\end{abstract}



 {\setcounter{section}{0}
\section{\normalsize Introduction}\setcounter{equation}{0}
In a quadriculated region consisting of unit squares with vertices in $\mathbb{Z}^2$, a \emph{domino} or a \emph{dimer} is the union of two adjacent squares, and a \emph{tiling} is a way to placing dominoes on the region so that there are no gaps or overlaps.
Enumeration of tilings,  known as the \emph{dimer problem}, has applications in statistical mechanics \cite{T2}. In 1961, Kasteleyn \cite{Ka} gave a formula for the number of domino tilings in a quadratic lattice, while Temperley and Fisher \cite{TF} arrived at the same result using a different method. John et al. \cite{J} gave a determinant of smaller order to count  tilings of a polyomino (simply-connected quadriculated region). Moreover, Elkies et al. \cite{E} obtained that Aztec diamond of order $n$ has exactly $2^{\frac{n(n+1)}{2}}$ domino tilings. 
Kim et al. \cite{K} counted the number of domino tilings for three variants of augmented Aztec diamond by Delannoy paths.

In the past  thirty years, some profound structure properties on the set of all tilings  of a fixed region has been established. For a tiling,  a local \emph{flip} is to remove  two parallel dominoes and place them back in the only possible different position. We define the \emph{flip graph} $\mathcal{T}(R)$ of a region $R$ on the set of all tilings whose vertices are all tilings and two tilings are adjacent if we obtain one from the other by a flip.
For a simply connected region,  Thurston \cite{Th90} obtained that the  flip graph  is connected, and Zhang \cite{Z4} showed that it is maximally connected (i.e its connectivity equals the minimum degree). For  a quadriculated region  with holes, using the homology theory Saldanha et al. \cite{T1}  gave three characterizations (combinational, homology and height section versions) for two tilings in the same connected component of the flip graph, and a distance formula between two tilings as well. The homology version of the above characterizations was also extended to quadriculated torus and Klein bottle (only surfaces of positive genus). Also for a simply connected region, R\'{e}mila \cite{RE} studied how each tiling  can be encoded by a height function and obtained that such an encoding induces a distributive lattice structure on the set of the tilings.

Domino tiling of a quadriculated region in plane has been extended in two ways. One way is  to consider a three-dimensional cubiculated manifold, and a domino as a pair of adjacent unit cubes.  Freire et al. \cite{LW17}  characterized  connected components of  tiling space of three-dimensional cubiculated region and torus under flips using  topological invariants twist and flux.
Saldanha \cite{S} obtained normal distribution of the twist of tilings in some three-dimensional cylinders.

The other way is to consider tilings of a  region of other regular polygons, for example, tilings of a triangulated region with calissons (i.e. lozenges whose sides have unit length, formed from two equilateral triangles sharing an edge)(see \cite{O, Th90}).

In the view of graphs, a quadriculated (resp. triangulated) region $D$ in $\mathbb{R}^2$ is a plane graph.
The \emph{inner dual} $D^{\#}$  is also a plane graph: the center of each square (resp. triangle) is placed a vertex,  and  two vertices are connected by an edge $e^*$ if the corresponding squares (resp. triangles) have an edge $e$ in common ($e^*$ only pass through edge $e$ of $D$). A domino (resp. lozenge) in $D$ corresponds to an edge in $D^{\#}$ and a tiling  in $D$ to a perfect matching in $D^{\#}$; for example, see  Fig. \ref{23}. Similar definitions and correspondences can be made for quadriculated surfaces or cubiculated manifolds.
\begin{figure}[h]
\centering
\includegraphics[height=2.8cm,width=14.5cm]{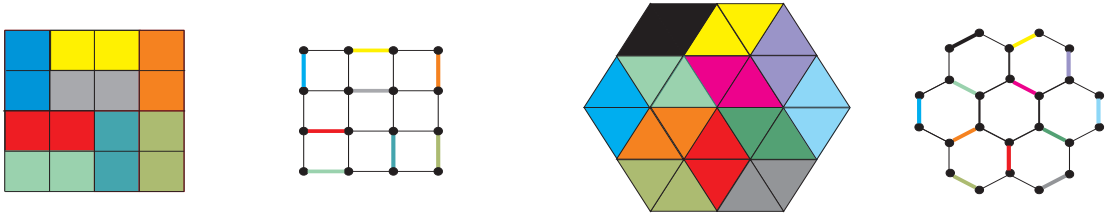}
\caption{\label{23}A domino (resp. lozenge) tiling and the corresponding perfect matching.}
\end{figure}

In particular, if the interior of $D$ is simply-connected, then $D^{\#}$ is a connected subgraph of polyomino graph (resp. hexagonal graph or system) such that each interior face is a square (resp. hexagon).
 A hexagonal system with a perfect matching can be viewed as the carbon-skeleton of a benzenoid hydrocarbon. The flip graph of a quadriculated region $D$ is equivalent to the
 resonance graph of $D^{\#}$ on the set of perfect matchings. The resonance graph was first introduced for a hexagonal system in \cite{Z1,G}, then extended to a plane (bipartite) graph \cite{Z3}. A suitable acyclic orientation for the resonance graph of a plane elementary bipartite graph implies the distributed lattice structure  and median property on the set of perfect matchings; for details, see \cite{Pr93, LZ, Zh06, ZLS08} .

In this article we consider quadriculated cylinder and torus.  Given an $m\times n$  chessboard ($m$ rows, each consists of $n$ squares) for  integers $m\ge 2$ and $n\ge 2$. Roll this rectangle into a cylinder so that the left and right sides stick together and results in $(m,n)$-quadriculated cylinder, denoted by $C(m,n)$. Further identifying top and bottom sides results in $(m,n)$-quadriculated torus, denoted by $T(m,n)$. Then the inner duals of $C(m,n)$ and $T(m,n)$ correspond to $C(m-1,n)$ and $T(m,n)$ respectively.

By using the resonance graph, we obtain that flip graph of quadriculated cylinder $C(2m, 2n+1)$ is connected, but the flip graph of quadriculated torus $T(2m, 2n+1)$ is disconnected, and consists of exactly two isomorphic components, which are equivalent under an automorphism of $T(2m, 2n+1)$.

Motivated by forcing a perfect matching,   we associate a tiling $t$ an integer $f(t)$, called {\em forcing number},  as the minimum number of dominoes in $t$ that is contained in no other tilings. Since a flip of a tiling of a quadriculated region does not change the forcing number by more than one, as an application of connectedness of the flip graph, we obtain that the forcing numbers of all tilings in both $2m\times (2n+1)$ quadriculated cylinder and torus form respectively an integer interval,  whose maximum value   $(n+1)m$ is  proved specially.

\section{\normalsize Preliminary and resonance graphs}
We have seen that tilings of a region can be transformed in perfect matchings of graphs. So we shall use graph-theoretical method to get our main results. First recall some useful concepts of graphs and embeddings.  Let $G$ be a graph with vertex set $V(G)$ and edge set $E(G)$. An edge subset $M$ of $G$ is called \emph{a perfect matching} if each vertex of $G$ is incident with exactly one edge in $M$. For a perfect matching $M$ of $G$, a cycle is \emph{M-alternating} if its edges appear alternately in $M$ and off $M$. Given an $M$-alternating cycle $C$ of $G$, the symmetric difference $M\oplus  E(C)$ is another perfect matching of $G$, and such an operation is called a \emph{flip} or \emph{rotation} of $M$ (along $C$), where the symmetric difference is defined as $A\oplus B:=(A\setminus B)\cup(B\setminus A)$ for sets $A$ and $B$.

Let $G$ be a graph cellularlly embedded on a (closed) surface, i.e.  each face is homeomorphic to an open disc in $\mathbb{R}^2$. Let $F(G)$ denote the set of all faces of $G$.  A face $f\in F(G)$ is \emph{even} (resp. \emph{odd}) if it is bounded by an even (resp. odd) cycle. For $f\in F(G)$, the boundary of  $f$ is denoted by $\partial f$. If $\partial f$ is a cycle, then we call it a \emph{facial cycle}. For convenience, sometimes  we do not distinguish  a face with its boundary, and a cycle with its edge set.

For a given  $F\subseteq F(G)$, the \emph{resonance graph} (or \emph{$Z$-transformation graph}) of $G$ with respect to $F$, denoted by $R(G; F)$, is a graph, where the vertices are   the perfect matchings of $G$ and two perfect matchings $M_1$ and $M_2$ are adjacent if and only if $M_1\oplus M_2$ consists of exactly one facial cycle in $F$, that is, $M_2$ is obtained from $M_1$ by a rotation along a facial cycle in $F$.
In particular,  $R(G; F(G))$ is written $R_t(G)$ which is called the \emph{total resonance graph} of $G$. For a plane graph $G$, there is exactly one unbounded face  which is called the \emph{outer face} of $G$, and others are called \emph{interior faces}.
If $F$ is the set of all interior faces of $G$, then $R(G; F)$ is simply written $R(G)$.

An edge $e$ of $G$ is \emph{allowed} if it lies in some perfect matching of $G$ and \emph{forbidden} otherwise.
A graph is said to be \emph{elementary} if its allowed edges form a connected subgraph.

\begin{lem}\cite{14}\label{2} A bipartite graph $G$ is elementary if and only if $G$ is connected and each edge is allowed.
\end{lem}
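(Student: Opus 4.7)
The ``if'' direction is immediate: if $G$ is connected and every edge is allowed, then the set of allowed edges spans all of $E(G)$ and the resulting subgraph is $G$ itself, which is connected, so $G$ is elementary.

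For the ``only if'' direction, suppose $G$ is bipartite and elementary with bipartition $X\cup Y$, and let $H$ denote the subgraph of $G$ whose edges are the allowed edges. Since $H$ is nonempty and connected, $G$ has at least one perfect matching $M$; every edge of $M$ is allowed, so $M\subseteq E(H)$, which makes $H$ spanning. A connected spanning subgraph of $G$ forces $G$ itself to be connected, disposing of half the conclusion.

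It remains to show every edge of $G$ is allowed, and I would argue by contradiction. Assume some edge $xy\in E(G)$ with $x\in X$ and $y\in Y$ is forbidden. Then $G-x-y$ has no perfect matching, so by Hall's theorem there is a nonempty $S\subseteq X\setminus\{x\}$ with $|N_G(S)|=|S|$ and $y\in N_G(S)$; the size equality forces $M$ to restrict to a bijection between $S$ and $N_G(S)$. My plan is to show that $S\cup N_G(S)$ is separated from its complement in $H$, which contradicts the connectedness of $H$ once one observes that $S\cup N_G(S)$ contains $S$ and its complement contains $x$, so both parts are nonempty.

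The separation is the heart of the argument, and the step where bipartiteness is used essentially. By the definition of $N_G(S)$ there are no $G$-edges between $S$ and $Y\setminus N_G(S)$, so any potential crossing edge of $H$ has the form $f=ab$ with $a\in X\setminus S$ and $b\in N_G(S)$. I would invoke the classical fact (obtained by taking the symmetric difference of two perfect matchings) that in a bipartite graph with perfect matching $M$, a non-$M$ edge is allowed if and only if it lies on some $M$-alternating cycle. Tracing such a cycle starting with $f$, the edge of $M$ at $b$ lands in $S$, a non-matching edge at a vertex of $S$ must stay inside $N_G(S)$, the next $M$-edge returns to $S$, and so on, so the walk is trapped inside $S\cup N_G(S)$ after the first step and cannot close up at $a$. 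Hence no such $f$ is allowed, the separation holds, and the contradiction follows. The main obstacle is exactly this trapping argument; the remaining bookkeeping (nonemptiness of $S$ and of $X\setminus S$, and the restriction of $M$) is routine.
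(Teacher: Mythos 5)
Your argument is correct. Note first that the paper offers no proof of this lemma at all --- it is quoted from Lov\'asz and Plummer's \emph{Matching Theory} --- so there is no in-paper proof to compare against; your write-up is essentially the standard textbook argument. The ``if'' direction and the deduction that $G$ is connected are immediate as you say. In the ``only if'' direction, your use of Hall's theorem is sound: from a deficient set $S\subseteq X\setminus\{x\}$ for $G-x-y$, the existence of a perfect matching $M$ of $G$ forces $|N_G(S)|=|S|$, $y\in N_G(S)$, and $M$ restricting to a bijection $S\to N_G(S)$. The trapping argument also closes up correctly: any $M$-alternating cycle through a crossing edge $ab$ with $b\in N_G(S)$ must follow the $M$-edge at $b$ into $S$, then a non-matching edge back into $N_G(S)$, and so on, so every vertex after $a$ lies in $S\cup N_G(S)$; in particular the vertex that would precede $a$ on the return lies in $N_G(S)$, whose $M$-partner lies in $S\neq\{a\}$, so no such cycle exists and the edge is forbidden. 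Hence the allowed subgraph would be disconnected between $S\cup N_G(S)$ and the part containing $x$, a contradiction. The only point worth making explicit is the tacit standing assumption that an elementary graph has at least one allowed edge (equivalently, a perfect matching), which is needed to produce $M$ in the first place; this is part of the usual convention for elementarity and is how the paper uses the lemma.
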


Suppose that $X$ is a nonempty subset of $V(G)$. The subgraph of $G$ whose vertex set is $X$ and whose edge set is the set of those edges of $G$ that have both end vertices in $X$ is called the subgraph of $G$ \emph{induced} by $X$ and is denoted by $G[X]$. The induced subgraph $G[V(G)\setminus X]$ is denoted by $G-X$.
If $G'$ is a subgraph of $G$, then $G'$ is called \emph{nice} if $G-V(G')$ has a perfect matching. So an even cycle $C$ of a graph $G$ with a perfect matching is nice if and only if $G$ has a perfect matching $M$ such that $C$ is an $M$-alternating cycle.

Let $C$ be a cycle of a plane graph $G$. By the well-known Jordan Curve Theorem, $C$ divides the plane into exactly two arcwise connected components (interior and exterior of $C$), which both have $C$ as the boundary. Let $I[C]$ and $O[C]$ denote the subgraphs of $G$ consisting of $C$ together with its interior and exterior respectively.

 Zhang et al. obtained some results for the resonance graph as follows.
\begin{thm}\cite{Z3}\label{1.3.3} Let $G$ be a plane elementary bipartite graph. Then $R(G)$ is connected.
\end{thm}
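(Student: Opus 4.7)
I would prove the theorem by induction on $|M \oplus N|$ for arbitrary perfect matchings $M, N$ of $G$, showing that $M$ and $N$ lie in the same component of $R(G)$. The base case $M = N$ is trivial. For the inductive step, since $G$ is bipartite, $D := M \oplus N$ decomposes into a vertex-disjoint union of even cycles, each being both $M$- and $N$-alternating. Among these I would select a cycle $C$ that is \emph{innermost} in the plane embedding, meaning the open interior of $C$ contains no other cycle of $D$.

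The crux of the argument is the following lemma, which I would establish separately: for any $M$-alternating nice cycle $C$ of a plane bipartite graph $G$, there exists a sequence $f_1, f_2, \ldots, f_k$ of interior facial cycles of $G$ contained in $I[C]$ such that $C = f_1 \oplus f_2 \oplus \cdots \oplus f_k$ and, for each $i$, the face $f_i$ is alternating with respect to the intermediate matching $M_i := M \oplus f_1 \oplus \cdots \oplus f_{i-1}$. I would prove this lemma by induction on the number of interior faces of $G$ lying inside $C$. If $C$ itself bounds a single interior face, take $k = 1$ and $f_1 = C$. Otherwise, using elementarity of $G$, I would locate an $M$-alternating interior facial cycle $f$ inside $I[C]$ that shares at least one edge with $C$; after flipping along $f$, the configuration $C \oplus f$ decomposes into nice alternating cycles lying in strictly smaller subregions of $I[C]$, to which the inductive hypothesis applies, producing the remaining terms of the sequence.

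Applying this lemma to the innermost $C$ yields a sequence of flips, each an edge of $R(G)$, transforming $M$ into $M \oplus C$. Because $C$ is innermost, $M$ and $N$ coincide on every edge of $I[C]$ off $C$, so $E(C) \subseteq M \oplus N$ and hence $(M \oplus C) \oplus N = (M \oplus N) \setminus E(C)$, which has cardinality $|M \oplus N| - |C| < |M \oplus N|$. By the outer inductive hypothesis, $M \oplus C$ lies in the same component of $R(G)$ as $N$, so $M$ does as well.

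The main obstacle I anticipate is the key lemma, and within it, the existence of an $M$-alternating interior facial cycle $f$ in $I[C]$ sharing at least one edge with $C$. Elementarity is essential here: in a non-elementary plane bipartite graph, forbidden edges could isolate interior $M$-alternating faces from the boundary $C$, breaking the induction. I expect that excluding this configuration requires constructing an $M$-alternating path from a vertex of $C$ into the interior of $I[C]$ and exploiting the characterization of elementarity in Lemma \ref{2} to guarantee that such a path closes into a facial cycle incident with $C$.
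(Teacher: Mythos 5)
First, a point of reference: the paper does not prove Theorem \ref{1.3.3} at all --- it is imported from \cite{Z3} and used as a black box (notably in the sufficiency half of Theorem \ref{2.2.2}), so there is no in-paper proof to compare against. Your outer induction on $|M\oplus N|$, peeling off one cycle of the symmetric difference at a time, is the standard reduction and is sound (it is also exactly how the paper runs the sufficiency argument in Theorem \ref{2.2.2}). The problem is inside your key lemma.

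You assert that elementarity lets you locate an $M$-alternating interior facial cycle $f$ in $I[C]$ sharing at least one edge with $C$, and your induction on subregions of $I[C]$ collapses without that adjacency. The assertion is false. Let $G=P_4\times P_4$ (a $3\times 3$ block of square faces), which is a plane elementary bipartite graph, let $C$ be its outer $12$-cycle, and let $M$ consist of the six boundary edges $(1,1)(1,2)$, $(1,3)(1,4)$, $(2,4)(3,4)$, $(4,4)(4,3)$, $(4,2)(4,1)$, $(3,1)(2,1)$ together with the two interior edges $(2,2)(2,3)$, $(3,2)(3,3)$, where $(i,j)$ is the vertex in row $i$ and column $j$. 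Then $C$ is $M$-alternating and every vertex of $C$ is matched along $C$, and a direct check of all nine faces shows that the \emph{only} $M$-alternating facial cycle in $I[C]$ is the central square, which is edge-disjoint from $C$. So at the very first step of your inner induction no admissible $f$ exists: one must first flip the central square to create alternating faces adjacent to $C$, and such a flip does not decrease your induction measure (the number of interior faces enclosed by the current cycle). What is actually true is only that $I[C]$ contains \emph{some} $M$-alternating facial cycle; converting that into a terminating flip sequence from $M$ to $M\oplus C$ is the real content of the theorem, and \cite{Z3} organizes the induction differently (via proper versus improper alternating cycles and a monotone potential, equivalently the distributive lattice structure on the set of perfect matchings). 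As stated, your key lemma is essentially the theorem itself applied to the elementary graph $I[C]$, so until this step is repaired the argument is circular.
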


\begin{thm}\cite{Z6}\label{1.3.4} Let $G$ be a plane bipartite graph with a perfect matching. Then $R(G)$ is connected if and only if $I[C]$ is elementary for each nice cycle $C$ of $G$.
\end{thm}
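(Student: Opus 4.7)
The statement is a biconditional, so I would prove the two directions separately. The easier direction is sufficiency (local elementarity of all interiors $I[C]$ implies connectedness of $R(G)$), which reduces to Theorem \ref{1.3.3} via a cycle-decomposition induction; the harder direction is necessity (connectedness of $R(G)$ implies elementarity of each $I[C]$), which requires a careful localization argument on flip sequences and is where the main obstacle lies.

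For sufficiency, given two perfect matchings $M, N$ of $G$, I would decompose $M \oplus N$ into pairwise vertex-disjoint alternating even cycles $C_1, \ldots, C_k$. Each $C_j$ is a nice cycle of $G$, since $M \setminus E(C_j)$ is a perfect matching of $G - V(C_j)$. Picking an innermost $C_{j_0}$ in the plane, the restrictions $M|_{I[C_{j_0}]}$ and $N|_{I[C_{j_0}]} = M|_{I[C_{j_0}]} \oplus E(C_{j_0})$ are both perfect matchings of $I[C_{j_0}]$. Since $I[C_{j_0}]$ is elementary by hypothesis, Theorem \ref{1.3.3} guarantees a sequence of flips along interior faces of $I[C_{j_0}]$ (each of which is also an interior face of $G$) carrying the first restriction to the second. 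Performing these flips in $G$ transforms $M$ into a matching $M'$ with $M' \oplus N$ containing $k-1$ cycles, and induction on $k$ produces a walk from $M$ to $N$ in $R(G)$.

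For necessity, fix a nice cycle $C$; I would show $I[C]$ is elementary. Planarity of $G$ implies connectedness of $I[C]$, and niceness of $C$ yields a perfect matching of $I[C]$ as $M|_{I[C]}$, where $M$ is a perfect matching of $G$ with $C$ being $M$-alternating. By Lemma \ref{2} it remains to show every edge $e \in E(I[C])$ is allowed in $I[C]$. For $e \in E(C)$, either $M|_{I[C]}$ or $(M \oplus E(C))|_{I[C]}$ contains $e$ and both are perfect matchings of $I[C]$. For $e$ strictly inside $C$, I would select a perfect matching $M^\ast$ of $G$ containing $e$ (forbidden edges of $G$ may be deleted without affecting $R(G)$), use connectedness of $R(G)$ to pick a flip path from $M^\ast$ to $M$, and exploit the plane-graph cycle-space identity --- which says the symmetric sum of the boundaries of the flipped faces equals $M^\ast \oplus M$ modulo $2$ --- to localize this path to flips inside $I[C]$.

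The main obstacle is the localization step. An interior face of $G$ lying outside $C$ but sharing edges with $C$, when flipped, can re-match vertices of $C$ via external edges, so the restriction $M_j|_{I[C]}$ of intermediate matchings along a flip path in $R(G)$ need not be a perfect matching of $I[C]$. To overcome this, the plan is to use the cycle-space identity to pair up such external flips (each face outside $C$ appears an even number of times in the multiset of flips mapping $M$ to $M \oplus E(C)$), rearrange the sequence so that paired flips are executed consecutively with the restriction to $I[C]$ undisturbed between consecutive pairs, and then contract away the paired flips to obtain a flip sequence lying entirely inside $I[C]$. This yields a path in $R(I[C])$ and, when combined with the corresponding localization of the $M^\ast$-to-$M$ path, produces an intermediate perfect matching of $I[C]$ containing $e$. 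The commutation and pairing of non-adjacent flips on a plane graph with a perfect matching is the delicate combinatorial core of the argument, likely addressed by an induction on the number of external flips that touch $E(C)$.
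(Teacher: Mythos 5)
Your sufficiency argument is essentially the paper's: induct on the number of cycles in $M\oplus N$, restrict to $I[C]$ for one alternating cycle, and invoke Theorem \ref{1.3.3} inside the elementary bipartite graph $I[C]$ (the ``innermost'' choice is harmless but unnecessary; the paper simply flips one whole cycle at a time via $M_3=M_2\oplus C$). The necessity direction, however, has two genuine gaps. First, for an edge $e$ strictly inside $C$ you ``select a perfect matching $M^\ast$ of $G$ containing $e$,'' which presupposes that $e$ is allowed in $G$. This cannot be waved away by deleting forbidden edges: deletion merges faces, so it changes both $R(G)$ (it can only create new flips) and $I[C]$ itself, and a forbidden edge inside a nice cycle is exactly a situation in which the theorem asserts $R(G)$ is disconnected --- so it must be refuted, not assumed away. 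The paper's proof (of the more general Theorem \ref{2.2.2}) handles this with a counting argument you omit entirely: writing $C=S_1\oplus\cdots\oplus S_t$ for a flip path from $M_1$ to $M_2=M_1\oplus C$ and letting $\delta(f)$ count occurrences of $\partial f$ among the $S_i$, one gets that $\delta(f)+\delta(f')$ is odd exactly when the common edge of $f$ and $f'$ lies on $C$; connectedness of the dual graph restricted to the inside and to the outside of $C$ (Jordan curve theorem) together with $\delta(f_0)=0$ for the outer face then forces $\delta(f)$ to be odd for every face $f$ inside $C$, so every such face is nice and every edge of $I[C]$ is allowed in $G$.

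Second, your plan for upgrading ``allowed in $G$'' to ``allowed in $I[C]$'' --- pairing up exterior flips along a flip path, reordering them, and contracting them away --- is not carried out, and the difficulty you flag is real: flips do not commute, since a face can be flipped only when it is currently alternating, so reordering a flip sequence is not a free operation, and intermediate matchings need not restrict to perfect matchings of $I[C]$. The paper avoids localization altogether with a short direct construction: take the cycle $C'$ of $M^\ast\oplus M_1$ through $e$; if $C'\subseteq I[C]$, flip $M_1|_{I[C]}$ along it; otherwise take the subpath $P$ of $C'$ through $e$ whose internal vertices lie strictly inside $C$ and whose ends $u,v$ lie on $C$. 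Its end-edges belong to $M^\ast$ (because $u$ and $v$ are already $M_1$-matched along $C$), so $P$ has odd length; by bipartiteness both arcs of $C$ from $u$ to $v$ are odd, and the arc whose end-edges lie in $M_1$ together with $P$ forms an $M_1$-alternating cycle of $I[C]$ containing $e$ as a non-matching edge, whose flip exhibits $e$ in a perfect matching of $I[C]$. You should either substitute this argument or actually prove your rearrangement lemma; as written the necessity direction is incomplete.
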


\begin{thm}\cite{Z6} Let $G$ be a plane bipartite graph with a perfect matching. Then $R_t(G)$ is connected if and only if $I[C]$ or $O[C]$ is elementary for each nice cycle $C$ of $G$.
\end{thm}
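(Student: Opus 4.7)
The plan is to prove both implications of this biconditional.

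For sufficiency, assume $I[C]$ or $O[C]$ is elementary for every nice cycle $C$ of $G$. Given perfect matchings $M,M'$ of $G$, the symmetric difference $M\oplus M'$ decomposes into vertex-disjoint $M$-alternating cycles $C_1,\ldots,C_r$, each of which is nice. I will handle these cycles one at a time, showing that $M$ and $M\oplus E(C_j)$ lie in the same component of $R_t(G)$ for each $j$, and iterate. Fix $C=C_j$. If $I[C]$ is elementary, then by Theorem~\ref{1.3.3}, $R(I[C])$ is connected; since $M\cap E(I[C])$ and $(M\cap E(I[C]))\oplus E(C)$ are both perfect matchings of $I[C]$ (because $C$ is $M$-alternating), they are joined by a sequence of interior-face flips of $I[C]$. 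Each such face is an interior face of $G$ inside $C$, so this sequence lifts to a flip path in $R_t(G)$ transforming $M$ into $M\oplus E(C)$ while leaving the strictly external part of $M$ fixed. The case when $O[C]$ is elementary is symmetric, after viewing $O[C]$ as a plane graph with $C$ bounding its outer face; its interior faces are exactly the faces of $G$ outside $C$, including the outer face of $G$, and these are precisely the faces available for flipping in $R_t(G)$ but not necessarily in $R(G)$.

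For necessity, I argue the contrapositive: assuming some nice cycle $C$ of $G$ has both $I[C]$ and $O[C]$ non-elementary, I will show $R_t(G)$ is disconnected. Take $M$ with $C$ an $M$-alternating cycle and set $M'=M\oplus E(C)$. My plan is to construct an invariant $\Phi$ on perfect matchings of $G$ that is preserved by every facial flip of $G$ (including the outer face) and distinguishes $M$ from $M'$. Lemma~\ref{2} together with non-elementarity and the connectedness of $I[C]$ (as the interior of a Jordan curve in $G$) and of $O[C]$ gives forbidden edges of $I[C]$ and $O[C]$, respectively. I intend to exploit the decomposition of $I[C]$ (resp.\ $O[C]$) into its maximal elementary subgraphs, the elementary components: a perfect matching of $I[C]$ factors as a product of perfect matchings of each such component, and an interior-face flip inside $I[C]$ respects this factorization. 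Combined with the symmetric structure on $O[C]$, these two decompositions should allow me to define $\Phi$ so that rotation along the cycle $C$ is forced to cross an elementary-component boundary on at least one side and hence toggle $\Phi$.

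The main obstacle will be the correct design of $\Phi$. The naive approach of restricting perfect matchings to $I[C]$ and tracking a walk in $R(I[C])$ (and symmetrically in $O[C]$) is complicated by the fact that a flip on an interior face of $G$ inside $C$ which shares edges with $C$ also alters the restriction on the $O[C]$ side, so the two restrictions are not independent. $\Phi$ must therefore be a global invariant with careful bookkeeping along $E(C)$, using the joint non-elementarity in an essential way. I expect the key technical step to be the proof that any sequence of facial flips on $G$ realizing $M\mapsto M'$ would have to traverse a forbidden edge in either $I[C]$ or $O[C]$, contradicting its forbidden status.
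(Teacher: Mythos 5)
Your sufficiency argument is sound and follows the standard route: decompose $M\oplus M'$ into vertex-disjoint nice cycles, and for each such cycle $C$ apply Theorem~\ref{1.3.3} to whichever of $I[C]$, $O[C]$ is elementary (re-embedding $O[C]$ so that $C$ bounds its outer face), then lift the interior-face flips of that subgraph to facial flips of $G$. The only points you should make explicit are that every vertex of $C$ is matched along $C$ (so the restrictions of the matchings really are perfect matchings of $I[C]$ and of $O[C]$), and that the interior faces of the re-embedded $O[C]$ are exactly the faces of $G$ exterior to $C$ together with the outer face of $G$, all of which are legal flip faces for $R_t(G)$ though not for $R(G)$.

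The necessity direction, however, is not a proof: you announce an invariant $\Phi$ built from the elementary components of $I[C]$ and $O[C]$, correctly identify the main obstruction (a flip on a face meeting $C$ changes the restrictions on both sides simultaneously), and stop there. That unexecuted construction is precisely the hard content, and the contrapositive/invariant route is the wrong tool for it. The clean argument is direct and is the same parity count used in the necessity half of Theorem~\ref{2.2.2}: if $R_t(G)$ is connected, take $M_1\oplus M_2=E(C)$ and a flip path between them, write $E(C)=S_1\oplus\cdots\oplus S_t$ with each $S_i$ a facial cycle (the outer face now being permitted), let $\delta(f)$ count occurrences of $\partial f$, and note that $\delta(f)+\delta(f')\equiv 1 \pmod 2$ exactly when the faces $f,f'$ share an edge of $C$. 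Since the faces interior to $C$ induce a connected subgraph of the dual graph, as do the faces exterior to $C$, all interior faces share one parity of $\delta$ and all exterior faces (including the outer face of $G$) share the other, and exactly one of the two parities is odd. Every face in the odd class occurs in the flip sequence, hence its boundary is a nice cycle; from this one concludes, exactly as in the proof of Theorem~\ref{2.2.2} (connectivity and evenness of faces via Lemma~\ref{2.1.1}, then allowedness of every edge via Lemma~\ref{2}), that $I[C]$ or $O[C]$ is elementary according to which class is odd. Without this argument, or a completed construction of your $\Phi$, the necessity direction remains a genuine gap.
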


Let $G$ be a graph cellularly embedded on a surface. Then the \emph{dual graph} $G^*$ of $G$ is defined as follows. $G^*$ is a multigraph that has exactly one vertex in each face of $G$. If $e$ is an edge of $G$, then $G^*$ has an edge $e^*$ crossing $e$ exactly once ($e^*$ has no other points in common with $G$) and joining the two vertices of $G^*$ in the two faces of $G$ having edge $e$ in common (if $e$ is a cut edge of $G$, then the two faces are the same and $e^*$ is a loop). We can see that $G^*$ is always connected.

A graph is \emph{bipartite} if its vertex set can be partitioned into two subsets $X$ and $Y$ so that every edge has one end vertex in $X$ and the other in $Y$.
Tratnik and Ye \cite{TY} obtained the following result.
\begin{thm}\cite{TY} \label{ty} Let $G$ be a graph embedded in a surface and $F\subset F(G)$.
Then $R(G;F)$ is bipartite.
\end{thm}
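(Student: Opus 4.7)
The plan is to exhibit a proper 2-coloring of the vertex set of $R(G;F)$, which is the set of perfect matchings of $G$. If $G$ has no perfect matching, $R(G;F)$ is empty and the claim is trivial, so assume at least one exists. Since any $M$-alternating cycle has even length, a facial cycle $\partial f$ can occur as the symmetric difference $M\oplus M'$ of two perfect matchings only when $|\partial f|$ is even; faces in $F$ with odd boundary contribute no edges of $R(G;F)$ and may be discarded without loss of generality.

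The 2-coloring I propose is built from a subset $E_0\subseteq E(G)$ chosen so that $|E_0\cap\partial f|\equiv 1\pmod 2$ for every $f\in F$. Given such an $E_0$, set
\[\phi(M):=|M\cap E_0|\pmod 2.\]
For any edge $M\sim M'$ of $R(G;F)$ there is a unique $f\in F$ with $M'=M\oplus\partial f$, so $\chi_{M'}=\chi_M+\chi_{\partial f}$ in $\mathbb{F}_2^{E(G)}$, giving
\[\phi(M')\equiv\phi(M)+|E_0\cap\partial f|\equiv\phi(M)+1\pmod 2,\]
so $\phi$ is a proper 2-coloring and $R(G;F)$ is bipartite.

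The heart of the proof is the existence of $E_0$. This is an $\mathbb{F}_2$-linear feasibility question: by standard duality the system $\{\langle\chi_{E_0},\chi_{\partial f}\rangle=1:f\in F\}$ is consistent iff every $\mathbb{F}_2$-dependency $\sum_{f\in F'}\partial f=0$ with $F'\subseteq F$ forces $|F'|$ to be even. The kernel of the cellular boundary map $\partial:\mathbb{F}_2^{F(G)}\to\mathbb{F}_2^{E(G)}$ is spanned by the sum of all faces (because $G^*$ is connected), so the only potentially problematic dependency is the global relation $\sum_{f\in F(G)}\partial f=0$. This restricts to $F$ only when $F=F(G)$; in that case one must verify $|F(G)|$ is even from the combinatorial structure of the embedding. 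The main obstacle is precisely this last parity check, which for the paper's applications (quadriculated cylinder, whose surface has boundary and so carries no such global dependency, and quadriculated torus, whose $|F(G)|=2m(2n+1)$ is manifestly even) is either trivial or an immediate face count.
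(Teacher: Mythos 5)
Your proof is correct, but note that the paper does not actually prove Theorem \ref{ty}: it cites it from \cite{TY} and only supplies a proof for the companion statement, Theorem \ref{bipar}, and that proof runs in the opposite direction to yours. There one takes an arbitrary closed walk $M_1M_2\cdots M_kM_1$ in the resonance graph, notes that $S_1\oplus S_2\oplus\cdots\oplus S_k=\emptyset$, and counts the multiplicity $\delta(f)$ of each facial boundary in the sequence: the relation $\delta(f_1)+\delta(f_2)\equiv 0\ (\mathrm{mod}\ 2)$ across every edge together with connectivity of $G^*$ forces all $\delta(f)$ to have a common parity, and that parity is pinned to be even either by the existence of a face outside $F$ (the situation of Theorem \ref{ty}) or by the evenness of $|F(G)|$ via Euler's formula (Theorem \ref{bipar}), whence $k=\sum_f\delta(f)$ is even. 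Your argument is the exact linear-algebra dual: rather than showing every closed walk has even length, you construct an explicit $2$-coloring $M\mapsto |M\cap E_0| \bmod 2$ from a transversal $E_0$ meeting each relevant facial boundary an odd number of times, and your feasibility criterion for $E_0$ (every $\mathbb{F}_2$-dependency among the face-boundary vectors has even support) is precisely the dual formulation of the paper's parity count; both rest on the same structural fact that the kernel of the mod-$2$ face-boundary map is at most the span of the all-faces vector because $G^*$ is connected. Your version buys an explicit bipartition invariant (a flux-type functional on perfect matchings), while the paper's version is a self-contained elementary count that never invokes $\mathbb{F}_2$-duality. Two small points to tidy, neither of which damages the argument: uniqueness of the face $f$ realizing a flip is not needed and can fail (two faces may share the same boundary cycle), and you should discard from $F$ not only the faces with odd boundary but also those whose boundary is not a cycle at all (including the degenerate case $\chi_{\partial f}=0$), since only facial cycles contribute edges to $R(G;F)$.
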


However, $R_t(G)$ is not necessarily bipartite. For example,  the total resonance graph of a domino is a triangle. However we have the following general result.
\begin{thm}\label{bipar}Let $G$ be a connected graph cellularly embedded on an orientable surface $S$ of genus $h~(h\geq 0)$. If $G$ has  an even number of edges and a perfect matching, then $R_t(G)$ is bipartite.
\end{thm}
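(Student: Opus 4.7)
The plan is to reduce bipartiteness of $R_t(G)$ to a parity statement about closed walks and then exploit the cellular $\mathbb{Z}_2$-homology of $S$. Recall that a graph is bipartite iff every closed walk has even length. Given a closed walk $M_0, M_1, \ldots, M_k = M_0$ in $R_t(G)$, each step can be written as $M_{i+1} = M_i \oplus \partial f_i$ for some face $f_i \in F(G)$, and summing symmetric-difference-wise yields $\bigoplus_{i=0}^{k-1} \partial f_i = \emptyset$. Reinterpreting this identity in the cellular chain complex, the formal sum $\sum_i f_i \in C_2(S; \mathbb{Z}_2)$ is a $2$-cycle.

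The key input is that for a closed orientable surface of genus $h$ with the CW-structure induced by the cellular embedding, $H_2(S;\mathbb{Z}_2) \cong \mathbb{Z}_2$, and since there are no $3$-cells one has $Z_2(S;\mathbb{Z}_2) = H_2(S;\mathbb{Z}_2)$, generated by the fundamental class $[S] = \sum_{f \in F(G)} f$. Thus $\sum_i f_i$ equals either $0$ or $[S]$ in $C_2(S;\mathbb{Z}_2)$. In the first case, every face appears with even multiplicity among $f_0, \ldots, f_{k-1}$, so $k$ is a sum of even numbers and hence even. In the second case, every face appears with odd multiplicity, whence $k \equiv |F(G)| \pmod{2}$.

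To close out, I would show $|F(G)|$ is even. By Euler's formula $|V(G)| - |E(G)| + |F(G)| = 2 - 2h$, so $|F(G)| \equiv |V(G)| + |E(G)| \pmod{2}$; the hypotheses (perfect matching and even $|E(G)|$) force both summands to be even, so $|F(G)|$ is even and $k$ is even in both cases. The main point requiring care is the chain-complex bookkeeping: an edge may bound the same face from both sides, and a facial walk need not be a simple cycle. However, with $\mathbb{Z}_2$ coefficients such an edge contributes twice to $\partial f$ and therefore cancels, so $\sum_{f \in F(G)} \partial f = 0$ and the identification $[S] = \sum_{f \in F(G)} f$ both remain valid, and the homological argument carries through as stated.
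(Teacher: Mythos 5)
Your proof is correct and follows essentially the same route as the paper: both arguments reduce bipartiteness to showing that in any closed walk of $R_t(G)$ every face occurs with the same parity, then split into the all-even and all-odd cases and finish with Euler's formula to get $|F(G)|$ even. The only cosmetic difference is that you invoke $Z_2(S;\mathbb{Z}_2)=\{0,[S]\}$ for the CW-structure, which is precisely the fact the paper establishes by hand via the connectivity of the dual graph $G^*$.
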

\begin{proof}If $R_t(G)$ contains no cycles, then we have done. So we may assume that $R_t(G)$ has a cycle $M_1M_2\cdots M_kM_{k+1}(=M_1)$ so that  $M_{i+1}=M_i\oplus S_i$, where  $S_i$ is a facial cycle of $G$, for $i=1,2,\dots,k$. It suffices to prove that $k$ is even.

For a face $f$ of $G$, we denote by $\delta(f)$ the number of times $\partial f$ appeared in the sequence $S_1,S_2,\dots,S_k$. In particular, if $\partial f$ is not a cycle, then we have $\delta(f)=0$.
For an edge $e$ of $G$, let $f_1$ and $f_2$ denote the two faces of $G$ sharing $e$. We claim that $\delta(f_1)+\delta(f_2)\equiv 0\text{ (mod 2)}$.
Since $M_1M_2\cdots M_kM_1$ is a cycle of $R_t(G)$, we have $$M_1=M_k\oplus S_k=M_1\oplus S_1\oplus S_2\oplus\cdots \oplus S_k, $$
which yields $S_1\oplus S_2\oplus\cdots \oplus S_k=\emptyset$. Hence there are exactly even numbers of terms in the sequence $S_1,S_2,\dots,S_k$ that contains $e$. That is,   $\delta(f_1)+\delta(f_2)\equiv 0 \text{ (mod 2)}$.

Since the dual graph $G^*$ of $G$ is connected, all $\delta(f)$, $f\in F(G)$, have the same parity.  If $\delta(f)\equiv 0 \text{ (mod 2)}$ for $f\in F(G)$, then $k=\sum_{f\in F(G)}\delta(f)\equiv 0\text{ (mod 2)}$. Next we may assume that $\delta(f)\equiv 1 \text{ (mod 2)}$ for $f\in F(G)$. Then $k=\sum_{f\in F(G)}\delta(f)\equiv\sum_{f\in F(G)} 1 \equiv|F(G)| \text{ (mod 2)}.$ Since both $|V(G)|$ and $|E(G)|$ are even, Euler's Formula, $|V(G)|-|E(G)|+|F(G)|=2-2h$, implies that  $|F(G)|$ is also even. So $k$ is even.
\end{proof}

 $C(m,n)$ can be viewed as a graph in the sphere by adding a cap in each open end, which has exactly two faces of size $n$ and others of size 4 for $n\neq 4$.
 By Theorems \ref{ty} and \ref{bipar}, we have the following result.
\begin{cor}Let $n\geq 2$ and $m\geq 2$ be integers with even $mn$. Then $R_t(C(m-1,n))$ and $R_t(T(m,n))$ are bipartite. Hence $\mathcal{T}(C(m,n))$ and $\mathcal{T}(T(m,n))$ are bipartite.
\end{cor}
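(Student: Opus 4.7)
The plan is to apply Theorem~\ref{bipar} directly to $T(m,n)$ (cellularly embedded on the torus, $h=1$) and to $C(m-1,n)$ (cellularly embedded on the sphere via the capping just described, $h=0$), and to fall back on Theorem~\ref{ty} to cover the one parity case where Theorem~\ref{bipar} is not available. Since every flip is a rotation along a facial $4$-cycle, each flip graph is a spanning subgraph of the corresponding total resonance graph, so bipartiteness of $R_t$ descends automatically to the flip graph $\mathcal{T}$.

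For the torus I count $|V(T(m,n))|=mn$ (even by hypothesis) and, separating horizontal from vertical edges, $|E(T(m,n))|=mn+mn=2mn$, which is always even; Theorem~\ref{bipar} applies and gives that $R_t(T(m,n))$ is bipartite. For the cylinder a similar count yields $|V(C(m-1,n))|=mn$ and $|E(C(m-1,n))|=mn+(m-1)n=(2m-1)n$ (one fewer ring of longitudinal edges than in the torus case). When $n$ is even this edge count is even and Theorem~\ref{bipar} again applies.

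The remaining subcase is $n$ odd, which together with $mn$ even forces $m$ even; here $(2m-1)n$ is odd and Theorem~\ref{bipar} is unavailable. The key observation is that the two cap faces, each of boundary length $n$, are odd cycles and therefore cannot be $M$-alternating for any perfect matching $M$; consequently no adjacency of $R_t(C(m-1,n))$ can use a cap face. Taking $F$ to be the set of the $(m-1)n$ interior square faces, which is a proper subset of $F(C(m-1,n))$, one has $R_t(C(m-1,n))=R(C(m-1,n);F)$, and Theorem~\ref{ty} then yields bipartiteness. The only mildly delicate step in the proof is this odd-cycle reduction; everything else is a routine parity check based on the above vertex and edge counts.
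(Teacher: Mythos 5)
Your proposal is correct and follows essentially the same route as the paper: the same edge counts $2mn$ and $(2m-1)n$, the same application of Theorem~\ref{bipar} to the torus and to the cylinder when $n$ is even, and the same fallback for odd $n$ via the observation that the two odd cap faces can never be alternating, reducing to Theorem~\ref{ty}. Your explicit remark that the flip graph is a subgraph of the total resonance graph (so bipartiteness descends) is a small point the paper leaves implicit, but otherwise the arguments coincide.
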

\begin{proof}Since $mn$ is even, $C(m-1,n)$ and $T(m,n)$ have a perfect matching. Since each vertex of $T(m,n)$ is of degree 4, $T(m,n)$ has $2mn$ edges by degree sum formula. Since $T(m,n)$ is a toroidal graph, by Theorem \ref{bipar}, $R_t(T(m,n))$ is bipartite.

Since $C(m-1,n)$ has $2n$ vertices of degree 3, other vertices are of degree 4, $C(m-1,n)$ has $(2m-1)n$ edges by degree sum formula. If $n$ is even, then $C(m-1,n)$ has even edges.  By Theorem \ref{bipar}, $R_t(C(m-1,n))$ is bipartite. If $n$ is odd, then $C(m-1,n)$ has two odd faces. Since an odd face cannot be the symmetric difference of some two perfect matchings, by Theorem \ref{ty}, $R_t(C(m-1,n))$ is bipartite.
\end{proof}

\section{\normalsize The flip graph of quadriculated cylinder}
For a plane bipartite graph $G$ with a perfect matching, Theorem \ref{1.3.4} says that $R(G)$ is connected if and only if $I[C]$ is elementary for each nice cycle $C$ of $G$. In this section, we will extend the result to plane graphs. As a corollary we obtain that the flip graph of $C(2m,2n+1)$ is connected.
First we give a  result for a plane bipartite graph.
\begin{lem}\cite{Z5}\label{2.1.1} A plane graph is bipartite if and only if each face is even.
\end{lem}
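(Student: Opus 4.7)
The plan is to prove the two directions of the equivalence separately. For the forward implication, if $G$ is bipartite then every cycle of $G$ has even length, and in particular for each face $f$ whose boundary is a cycle $\partial f$, that cycle is even; this direction is immediate and uses no planarity.

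For the backward implication, assume every face of $G$ is even. I aim to show every cycle $C$ of $G$ has even length, which is equivalent to $G$ being bipartite. By the Jordan Curve Theorem, $C$ separates the sphere into two arcwise connected regions sharing boundary $C$; one of these regions (the ``interior'' $I[C]$ minus $C$ itself) is the union of finitely many bounded faces $f_1,\dots,f_k$ of $G$. The key claim is
\[
E(C)=\partial f_1\oplus\partial f_2\oplus\cdots\oplus\partial f_k,
\]
because every edge of $G$ lying strictly inside $C$ borders exactly two of the $f_i$'s (and so cancels under symmetric difference), while every edge of $C$ borders exactly one $f_i$ (the face immediately inside $C$ along that edge). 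Iterating the parity identity $|A\oplus B|\equiv |A|+|B|\pmod 2$ then yields
\[
|E(C)|\equiv\sum_{i=1}^{k}|\partial f_i|\equiv 0\pmod 2,
\]
since each $|\partial f_i|$ is even by hypothesis. Hence every cycle of $G$ is even, so $G$ is bipartite.

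The main obstacle is the edge-set identity above, since it rests on the Jordan Curve Theorem together with a careful accounting of how interior edges are shared by adjacent faces. A clean way to make this rigorous is induction on $k$: when $k=1$ the identity merely says $C$ is a facial cycle, and for $k\ge 2$ one picks a face $f_i$ sharing at least one edge with $C$, observes that $C\oplus\partial f_i$ is again a cycle (or an edge-disjoint union of cycles) whose interior contains $k-1$ faces of $G$, and applies the induction hypothesis. A minor subtlety, if one allows graphs that are not $2$-connected, is that a bridge lying inside $C$ is traversed twice in the boundary walk of the face containing it; but such doubled edges still cancel mod $2$, so the parity argument carries through unchanged.
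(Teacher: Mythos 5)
Your argument is correct, and it is essentially the standard textbook proof: the paper itself gives no proof of this lemma, citing it directly from West's \emph{Introduction to Graph Theory} (reference [Z5]), where exactly this face-summation/double-counting argument appears. One point to tighten: the displayed identity $E(C)=\partial f_1\oplus\cdots\oplus\partial f_k$ is literally false as an edge-set equation when a bridge lies strictly inside $C$, since such an edge borders only one face and therefore survives the symmetric difference; what you actually need (and what your parity computation uses) is the walk-length congruence $\sum_{i}|\partial f_i|=|E(C)|+2\,|\{\text{edges strictly inside }C\}|$, where $|\partial f_i|$ counts the boundary walk with multiplicity so that a bridge contributes $2$ to its unique incident face. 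You already flag this in your last sentence, so the fix is just to state the claim as that congruence rather than as an edge-set identity; with that rewording the induction on $k$ becomes unnecessary, as the direct double count over edges suffices. (Note also that under the paper's convention a face is ``even'' only when its boundary \emph{is} a cycle, and in the only places the lemma is applied --- to $I[C]$ for a nice cycle $C$ --- every face is indeed bounded by a cycle, so the bridge subtlety never arises there.)
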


\begin{thm}\label{2.2.2} Let $G$ be a connected plane graph with a perfect matching. Then $R(G)$ is connected if and only if $I[C]$ is an elementary bipartite graph for each nice cycle $C$ of $G$.
\end{thm}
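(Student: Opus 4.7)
For the backward direction, given any two perfect matchings $M$ and $N$ of $G$, their symmetric difference $M\oplus N$ decomposes into vertex-disjoint $M$-alternating cycles $C_1,\dots,C_r$. Each $C_i$ is nice in $G$, so by hypothesis $I[C_i]$ is elementary bipartite and Theorem~\ref{1.3.4} gives that $R(I[C_i])$ is connected. Since the interior faces of $I[C_i]$ are interior faces of $G$ lying inside $C_i$, every flip in $R(I[C_i])$ is also a flip in $R(G)$. Thus we flip $M$ to $M\oplus E(C_1)$ through $I[C_1]$, then to $M\oplus E(C_1)\oplus E(C_2)$ through $I[C_2]$ (the $C_j$ are vertex-disjoint, so each $C_{i+1}$ remains alternating after the earlier flips), and iterate to reach $N$.

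For the forward direction, let $M$ witness the niceness of $C$ and set $M'=M\oplus E(C)$. A flip-path in $R(G)$ from $M$ to $M'$ yields
\begin{equation*}
E(C)\;=\;\bigoplus_{i}\partial f_i \pmod 2,
\end{equation*}
where each $f_i$ is an interior face of $G$ whose boundary is an even cycle (only even cycles can flip a perfect matching). Hence $E(C)$ lies in the $\mathbb{Z}/2$-span of boundaries of even interior faces. The map $A\mapsto\bigoplus_{f\in A}\partial f$ on $\mathbb{Z}/2^{F(G)}$ has kernel $\{\varnothing,F(G)\}$ (because the dual $G^*$ of a connected plane graph is connected), so $E(C)$ has exactly two representations as a $\mathbb{Z}/2$-sum of face boundaries: the sum over faces inside $C$ and the sum over faces outside $C$ (the latter necessarily including the outer face of $G$). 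Since only the first representation uses interior faces alone, every face of $G$ inside $C$ must be even; combined with $C$ itself being even, Lemma~\ref{2.1.1} yields that $I[C]$ is bipartite.

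It remains to show $I[C]$ is elementary. Connectedness of $I[C]$ is immediate, and each edge of $E(C)$ lies in $M\cap E(I[C])$ or in $(M\oplus E(C))\cap E(I[C])$. For $e\in E(I[C])\setminus E(C)$, I would first argue that $e$ is allowed in $G$: both faces of $G$ adjacent to $e$ lie inside $C$, and hence, by the analysis above, appear in the odd-multiplicity set of any $R(G)$-flip-path from $M$ to $M'$; flipping either such face toggles $e$'s membership in the current perfect matching, so some perfect matching along the path contains $e$. Given such a perfect matching $N$, take the $M$-alternating cycle $D\subseteq M\oplus N$ through $e$ and localize it to an $M$-alternating cycle $D'\subseteq I[C]$ through $e$, by rerouting each excursion of $D$ outside $C$ between two $V(C)$-vertices along one of the two $C$-arcs between them, chosen to match alternation parity at both endpoints; then $(M\cap E(I[C]))\oplus E(D')$ is a perfect matching of $I[C]$ containing $e$. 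The main obstacle is this localization step: the rerouting $C$-arcs for different excursions must be chosen consistently so that $D'$ is a simple cycle, and the subcases where $D$ already uses $E(C)$-edges or where two excursions meet at a common $V(C)$-vertex require care.
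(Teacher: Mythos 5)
Your sufficiency argument and the bipartiteness half of the necessity argument are essentially the paper's. Two small remarks there: for sufficiency you should invoke Theorem \ref{1.3.3} (connectivity of $R$ for a plane \emph{elementary} bipartite graph) rather than Theorem \ref{1.3.4}, whose hypothesis you have not verified for $I[C_i]$ itself; and your $\mathbb{Z}/2$-kernel argument for why every face inside $C$ occurs with odd multiplicity in the flip sequence is the same dual-connectivity fact the paper establishes via the Jordan Curve Theorem. Up to and including ``every face inside $C$ is even, hence $I[C]$ is bipartite by Lemma \ref{2.1.1}, and every edge of $I[C]$ is allowed in $G$,'' you and the paper agree.

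The genuine gap is the final step, which you flag yourself: showing each $e\in E(I[C])\setminus E(C)$ is allowed \emph{in} $I[C]$. Your plan is to take an $M$-alternating cycle $D$ through $e$ and reroute \emph{every} excursion of $D$ outside $C$ along an arc of $C$. Besides the simplicity/consistency issues you mention, the parity matching can genuinely fail: an outside excursion is an odd $M$-alternating path between $u,v\in V(C)$ with both end-edges off $M$, so the replacement arc must be odd with both end-edges off $M$; but the excursion lives outside $C$, where $G$ may be non-bipartite, so $u$ and $v$ need not lie in different color classes of $I[C]$, in which case both arcs of $C$ between them are even and neither has the required end-edge pattern. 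The paper never touches the outside excursions. It keeps only the single maximal subpath $P$ of $D$ through $e$ whose internal vertices lie strictly inside $C$, with endpoints $u,v\in V(C)$ (if $D\subseteq I[C]$ there is nothing to do). Since $C$ is $M$-alternating, the $M$-edge at each vertex of $C$ lies on $C$, so both end-edges of $P$ are off $M$ and $P$ has odd length; since $P\subseteq I[C]$ and $I[C]$ is already known to be bipartite, both arcs of $C$ from $u$ to $v$ are also odd, and exactly one of them has both end-edges in $M$. That arc together with $P$ is a simple $M$-alternating cycle of $I[C]$ through $e$, and flipping $M|_{I[C]}$ along it yields a perfect matching of $I[C]$ containing $e$; Lemma \ref{2} then gives elementarity. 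Replacing your rerouting step by this one-segment closure is what is needed to complete the proof.
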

\begin{proof}Sufficiency. Let $M_1$ and $M_2$ be any two perfect matchings of $G$. We will prove that there is a path in $R(G)$ between $M_1$ and $M_2$. We apply induction on the number $m$ of cycles in $M_1\oplus M_2$.  For $m=1$, let $C=M_1\oplus M_2$. Since  $C$ is a nice cycle of $G$, $I[C]$ is an elementary bipartite graph. Then the restrictions $M_i'=M_i|_{I[C]}$,  $i=1,2$,  are two perfect matchings of $I[C]$. By Theorem \ref{1.3.3}, $R(I[C])$ has a path between them: $M^1(=M_1')M^2\cdots  M^{l}(=M'_2).$
Setting $M=M_1\setminus M_1'$,  we have that
$$P'':=(M^1\cup M)(M^2\cup M)\cdots (M^{l}\cup M)$$ is a path in $R(G)$ between $M_1$ and $M_2$.
For $m\geq 2$, take  a cycle $C$ in $M_1\oplus M_2$.  Then $M_3=M_2\oplus C$ is a perfect matching of $G$. Moreover, $M_1\oplus M_3$ contains exactly $m-1$ cycles and $M_2\oplus M_3=C$. By the induction hypothesis, $R(G)$ has a path between $M_1$ and $M_3$ and a path between $M_2$ and $M_3$, and thus has  a path between $M_1$ and $M_2$.

Necessity. Let $C$ be any nice cycle of $G$. We will show that  $I[C]$ is an elementary bipartite graph.
Since $C$ is a nice cycle, $G$ has two perfect matchings $M_1$ and $M_2$ such that $M_1\oplus M_2=C$. Since $R(G)$ is connected, it has a path $P=M^0(=M_1)M^1M^2\cdots M^t(=M_2)$ between $M_1$ and $M_2$.  Then $S_i:=M^{i-1}\oplus M^{i}$ is the boundary of an interior face for $1\leq i\leq t$, and
$$C=M_1\oplus M_2=M^0\oplus M^t=S_1\oplus S_2\oplus \cdots \oplus S_t.$$ For a face $f$, let $\delta(f)$ be the times  $\partial f$ appearing  in the sequence $S_1,S_2, \dots, S_t.$
For two faces $f$ and $f'$ that have an edge $e$ in common, we have
 \begin{equation*}
 \delta(f)+\delta(f')=
 \begin{cases}
 0 \text{ (mod 2)}, & \quad {\text {if $e\notin C$}};\\
 1 \text{ (mod 2)}, &\quad {\text {if $e\in C$}}.
 \end{cases}
 \end{equation*}

We claim that the subgraphs of the dual graph $G^*$ induced by faces lying in the interior and exterior of $C$ respectively are connected. Let $x^*$ and $y^*$ be any two vertices of $G^*$ lying in the interior (resp. exterior) of $C$. By Jordan Curve Theorem, the interior (resp. exterior) of $C$ is arcwise connected. There is a curve connecting $x^*$ and $y^*$ which avoids all vertices of $G$. Then the sequence of faces and edges of $G$ traversed by this curve corresponds to a walk in $G^*$ connecting $x^*$ and $y^*$. So there is a path between $x^*$ and $y^*$ and the claim holds.
For the outer face $f_0$ of $G$, we have $\delta(f_0)=0$.
By the claim, $\delta(f)$ is even or odd according to a face $f$ of $G$ lies in the exterior or interior of $C$. Hence $\partial f$ is a nice cycle of $G$ for each face $f$ in the interior of $C$. So, by Lemma \ref{2.1.1} $I[C]$ is a connected bipartite graph and each edge is allowed in $G$. Next we show that each edge $e$ of $I[C]$ is allowed in $I[C]$. If $e\in M_1\cup M_2$, then it is trivial since the restriction of $M_1$ and $M_2$ on $I[C]$ are perfect matchings. Otherwise, $G$ has a perfect matching $M$ that contains $e$.  Let $C'$ be a cycle of  $ M\oplus M_1$ including $e$.  If $C'\subseteq I[C]$, then $M_1|_{I[C]}\oplus C'$ is a perfect matching of $I[C]$ that contains $e$, and $e$ is allowed in $I[C]$. Otherwise, take a path $P$ of $C'$ in the interior of $C$ such that only the end vertices $u$ and $v$ of $P$ lie in $C$. Obviously $P$ is an $M$ and $M_1$-alternating path whose two edges incident to $u$ and $v$ belong to $M$, and thus $P$ is of odd length. Since $I[C]$ is bipartite, two paths of $C$ between $u$ and $v$ are also of odd length, and one and $P$ form an $M_1$-alternating cycle in $I[C]$. As the above, $e$ is also an allowed edge of  $I[C]$. By Lemma \ref{2}, $I[C]$ is elementary. Summing up the above, we have that $I[C]$ is an elementary bipartite graph.
\end{proof}

Zhang \cite{Z4} gave a criterion to decide if a polyomino is elementary or not.
\begin{lem}\cite{Z4}\label{2.1.3} Let $G$ be a polyomino. Then $G$ is elementary if and only if the boundary of $G$ is a nice cycle.
\end{lem}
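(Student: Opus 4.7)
My plan is to treat the two directions separately, making use of the planar bipartite structure of polyominoes and the fact (Lemma~\ref{2}) that elementariness is characterized by $G$ being connected with every edge allowed.

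For \emph{necessity}, suppose $G$ is elementary. I would first show that $G$ is $2$-connected. Every edge of a polyomino lies on some unit-square $4$-cycle, so $G$ has no cut edge; and if $v$ were a cut vertex, then $G-v$ would split into exactly two ``lobes'' $H_1, H_2$, and Lemma~\ref{2} together with elementariness would force each $|V(H_i)|$ to be odd (so that every edge incident to $v$ in either lobe extends to a perfect matching), making $|V(G)|$ odd and contradicting the existence of a perfect matching. Hence the outer boundary $C_0$ of $G$ is a simple even cycle. To show $C_0$ is nice, I would construct a perfect matching $M$ of $G$ for which $M\cap E(C_0)$ is a perfect matching of $C_0$; then $M\setminus E(C_0)$ is a perfect matching of $G-V(C_0)$. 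I would build such $M$ starting from an arbitrary perfect matching $M_0$ and using the connectedness of $R(G)$ (Theorem~\ref{1.3.3}) to apply a carefully chosen sequence of interior-face rotations. The key identity is that $E(C_0)$ equals the symmetric difference of all interior-face boundaries of $G$ (each interior edge lies on two interior faces and cancels, each $C_0$-edge lies on exactly one), so the combined rotations can realign $M_0$ to the desired $M$.

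For \emph{sufficiency}, assume $C_0$ is a nice cycle and let $N$ be a perfect matching of $G-V(C_0)$. The two perfect matchings $A, B$ of the even cycle $C_0$ combine with $N$ to give perfect matchings $M_A = N\cup A$ and $M_B = N\cup B$ of $G$ with $M_A \oplus M_B = E(C_0)$, so every edge on $C_0$ is allowed. By Lemma~\ref{2}, it remains to verify that each interior edge lies in some perfect matching. For an interior edge $e$, I would locate $e$ on the boundary $\partial f$ of some unit-square interior face $f$ and show that $\partial f$ is also a nice cycle; then a perfect matching of the $4$-cycle $\partial f$ containing $e$, combined with a perfect matching of $G-V(f)$, produces a perfect matching of $G$ containing $e$. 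To show every interior face is nice, I would induct on the number of unit squares via a peeling argument: pick a corner unit square $f'$ sharing at least two edges with $C_0$, verify that after suitably removing $f'$ (and absorbing its non-shared vertices into the new boundary) the resulting smaller polyomino still has a nice outer boundary, and invoke the inductive hypothesis to propagate niceness back to $f'$.

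The principal obstacle is rigorously justifying these two intermediate steps: in necessity, that the interior-face rotation sequence is realizable at every intermediate matching (the face to be rotated must be alternating in the current matching, not merely in $M_0$); in sufficiency, that a suitable corner unit square always exists, that the peeling operation produces a valid smaller polyomino, and that niceness of the outer boundary is inherited. Both obstacles can be addressed by combining the alternating-path machinery of bipartite matching theory with the restrictive rectilinear lattice geometry of polyominoes, together with the fact that every simply connected rectilinear polygon has at least four convex corners.
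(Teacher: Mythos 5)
The paper does not prove this lemma; it is imported verbatim from \cite{Z4}, so your attempt has to stand on its own, and as written it does not. The fatal gap is in your necessity direction. Your mechanism for producing a perfect matching $M$ with $C_0$ $M$-alternating is to start from an arbitrary $M_0$ and rotate each interior face once, invoking the identity $E(C_0)=\bigoplus_f \partial f$. But if such a sequence of rotations is realizable at every step, its endpoint is forced to be $M_0\oplus E(C_0)$, and that set is a perfect matching if and only if $C_0$ was \emph{already} $M_0$-alternating. So the existence of your ``carefully chosen sequence'' is at least as strong as the conclusion you want, and for a generic $M_0$ it simply fails: in the $3\times 4$ grid (two rows of three unit squares) the all-horizontal perfect matching leaves the two boundary vertices of the middle row matched by non-boundary edges, so $C_0$ is not alternating for it and no such rotation sequence exists. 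Connectedness of $R(G)$ (Theorem~\ref{1.3.3}) only says every perfect matching is reachable from $M_0$; it does not produce one in which $C_0$ is alternating. What is actually needed here is the theorem of Zhang and Zhang \cite{Z3} that in a plane elementary bipartite graph the boundary of \emph{every} face, including the outer face, is a nice cycle --- a substantive result proved via reducible face (ear) decompositions, not recoverable from the symmetric-difference identity.

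The sufficiency direction is also not yet a proof. That every edge of $C_0$ is allowed is fine, but the reduction of interior edges to the claim ``every interior face is nice'' is carried entirely by an unproved peeling induction: you must show a corner square can always be removed so that the residue is again a connected polyomino of even order whose boundary is nice, and then transport niceness back to the removed face. Each of these steps (especially inheriting niceness of the new boundary, which requires exhibiting a perfect matching of the smaller graph minus its boundary) is of the same order of difficulty as the lemma itself and is left unaddressed. A small additional slip: in your cut-vertex argument the two lobes do not both come out odd; matching $v$ into $H_1$ forces $|V(H_1)|$ odd and $|V(H_2)|$ even, while matching $v$ into $H_2$ forces the reverse, and the contradiction comes from comparing the two, not from summing to an odd $|V(G)|$. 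The conclusion that an elementary bipartite graph on at least four vertices is $2$-connected is correct, but the bookkeeping as stated is wrong.
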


\begin{cor}\label{thm1} For $n\geq 1$ and $m\geq 1$, $R_t(C(2m-1,2n+1))$ is connected.
\end{cor}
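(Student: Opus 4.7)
The plan is to embed $G := C(2m-1,2n+1)$ as a plane graph by drawing the cylinder as an annulus in $\mathbb{R}^2$. In this embedding, the bounded ``hole'' face and the unbounded outer face are both $(2n+1)$-gons, while every other face is a unit $4$-square. Since an odd facial cycle can never arise as the symmetric difference of two perfect matchings, neither cap contributes any edges to the total resonance graph, so $R_t(G)$ coincides with the planar resonance graph $R(G)$ here. It thus suffices to prove that $R(G)$ is connected, which by Theorem~\ref{2.2.2} reduces to verifying that $I[C]$ is an elementary bipartite graph for every nice cycle $C$ of $G$.

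The central claim is that every nice cycle of $G$ is contractible in the annulus, i.e.\ does not wind around the hole. Since a nice cycle is even, I only need to rule out simple cycles of winding number $\pm 1$. For that I will lift such a hypothetical wrapping cycle to the universal cover, the infinite grid strip of height $2m$, which is manifestly bipartite via the checkerboard colouring $(i,j)\mapsto (i+j)\bmod 2$. The lift is a path from some $(i,j)$ to its horizontal translate $(i,j+(2n+1))$; bipartiteness then forces its length to be $\equiv 2n+1\equiv 1 \pmod 2$, contradicting evenness. Since a simple closed curve in the annulus has winding number at most $1$ in absolute value, no nice cycle can wrap, and consequently every nice cycle $C$ bounds a topological disk in the plane, making $I[C]$ a simply connected polyomino.

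With contractibility established, the rest is a short assembly. All interior faces of $I[C]$ are $4$-squares and its outer boundary is the even cycle $C$, so Lemma~\ref{2.1.1} gives that $I[C]$ is bipartite. Moreover, the Jordan Curve Theorem splits $G-V(C)$ into its interior and exterior components, on each of which any perfect matching of $G-V(C)$ restricts to a perfect matching. Since $C$ is nice in $G$, such a matching exists, so the interior part---namely $I[C]-V(C)$---has a perfect matching, meaning $C$ is also a nice cycle of $I[C]$. Lemma~\ref{2.1.3} then yields that $I[C]$ is elementary, fulfilling the hypothesis of Theorem~\ref{2.2.2} and giving $R_t(G)=R(G)$ connected.

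The main obstacle is the parity argument of the second paragraph, which is the one place where the hypothesis ``$2n+1$ odd'' really does work for us rather than against us. Once wrapping nice cycles have been excluded, the remaining bookkeeping---confirming bipartiteness via Lemma~\ref{2.1.1}, transferring the niceness of $C$ from $G$ to $I[C]$ via the Jordan splitting, and invoking the polyomino criterion of Lemma~\ref{2.1.3}---is routine. I do not foresee any difficulty with the reduction $R_t=R$, since both cap faces are odd and therefore inert for flips.
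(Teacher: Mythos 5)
Your proposal is correct and follows the same skeleton as the paper's proof: identify $R_t(C(2m-1,2n+1))$ with $R(C(2m-1,2n+1))$ because the two $(2n+1)$-gonal cap faces are odd and hence flip-inert, then verify the hypothesis of Theorem~\ref{2.2.2} by showing each $I[C]$ is an elementary bipartite polyomino via Lemmas~\ref{2.1.1} and~\ref{2.1.3}. The only divergence is in the key step that a nice cycle $C$ cannot enclose the inner odd face: you exclude winding cycles by lifting to the universal cover and using checkerboard parity, whereas the paper deduces the same fact from the face-degree sum formula (otherwise $I[C]$ would have exactly one odd face, which is impossible); both arguments are valid and both hinge on $2n+1$ being odd.
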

\begin{proof}Note that $C(2m-1,2n+1)$ has $2m(2n+1)$ vertices and a perfect matching. Since $C(2m-1,2n+1)$ can be also viewed as a plane graph with two odd faces $f_1$ and $f_2$  where $f_2$ is the outer face (see Fig. \ref{D2}). Thus $R_t(C(2m-1,2n+1))$ is isomorphic to $R(C(2m-1,2n+1))$. By Theorem \ref{2.2.2}, it suffices to prove that $I[C]$ is elementary bipartite for each nice cycle $C$ of $C(2m-1,2n+1)$.
\begin{figure}[h]
\centering
\includegraphics[height=3.5cm,width=4.5cm]{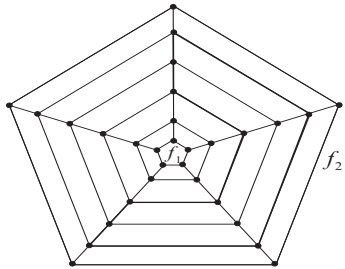}
\caption{\label{D2}The two odd faces $f_1$ and $f_2$ in $C(5,5)$.}
\end{figure}

Since the outer face of $I[C]$ is bounded by $C$, the outer face of $I[C]$ is even. Any interior face of $I[C]$ is either a square or odd face $f_1$. By face-degree sum formula of a plane graph, we obtain that $I[C]$ has even number of odd faces. Thus $I[C]$ has no odd faces and it is a polyomino. By Lemma \ref{2.1.1}, $I[C]$ is bipartite.  Since $C$ is a nice cycle of $I[C]$, $I[C]$ is elementary by Lemma \ref{2.1.3}.
\end{proof}

For $n\geq 1$ and $m\geq 1$, $R_t(C(2m-1,2n+1))$ and $\mathcal{T}(C(2m,2n+1))$ are isomorphic. By Corollary \ref{thm1}, we obtain the following result.
\begin{cor}For $n,m\geq 1$, $\mathcal{T}(C(2m,2n+1))$ is connected.
\end{cor}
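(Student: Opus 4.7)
The plan is to reduce this statement directly to \textbf{Corollary \ref{thm1}} via the dual-graph correspondence that has been developed throughout the section. The key observation is that a domino tiling of the quadriculated cylinder $C(2m,2n+1)$ corresponds bijectively to a perfect matching of its inner dual graph, which is $C(2m-1,2n+1)$, and under this correspondence a flip of two side-by-side dominoes corresponds to a rotation of the matching along the boundary of a face of $C(2m-1,2n+1)$.

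The slight subtlety is which faces play a role. When $C(2m-1,2n+1)$ is drawn in the plane (as in Fig.~\ref{D2}), all of its bounded faces are squares, except for one of the two $(2n+1)$-gonal faces $f_1$ capping an open end; the other $(2n+1)$-gonal face $f_2$ is the outer face. A flip in the cylinder $C(2m,2n+1)$ corresponds to rotation along a $4$-cycle bounding a square face, but also possibly along $f_1$ or $f_2$ (the two caps), since both are legitimate faces of $C(2m-1,2n+1)$ when it is embedded on the sphere. In other words, every facial cycle of the spherical embedding of $C(2m-1,2n+1)$ is available, so the flip graph $\mathcal{T}(C(2m,2n+1))$ is isomorphic to the \emph{total} resonance graph $R_t(C(2m-1,2n+1))$, which is exactly the isomorphism recorded just before the corollary.

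With this identification in hand, the result is immediate: by Corollary \ref{thm1}, $R_t(C(2m-1,2n+1))$ is connected, hence so is $\mathcal{T}(C(2m,2n+1))$.

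The only step requiring any care is verifying the isomorphism $\mathcal{T}(C(2m,2n+1))\cong R_t(C(2m-1,2n+1))$, in particular checking that rotations along the two odd ``cap'' faces of $C(2m-1,2n+1)$ really correspond to valid flips on the cylinder. This is, however, a routine matter of unpacking definitions, not a genuine obstacle, since the paper has already explicitly stated this isomorphism. So the proof will amount to one sentence invoking the isomorphism together with Corollary \ref{thm1}.
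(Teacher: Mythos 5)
Your proposal is correct and follows essentially the same route as the paper: identify $\mathcal{T}(C(2m,2n+1))$ with $R_t(C(2m-1,2n+1))$ and invoke Corollary \ref{thm1}. One small clarification: rotations along the two odd cap faces $f_1,f_2$ never occur at all, since an odd cycle can never be $M$-alternating --- this is exactly why $R_t(C(2m-1,2n+1))$ coincides with the flip graph --- whereas your wording suggests such rotations might correspond to additional valid flips on the cylinder; they simply contribute no edges.
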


\begin{remark}For $n,m\geq 1$, $\mathcal{T}(C(2m,2n))$ is not connected. For example, a tiling of quadriculated cylinder $C(4,6)$ in Fig. \ref{idu} is an isolated vertex of  $\mathcal{T}(C(4,6))$.
\begin{figure}[h]
\centering
\includegraphics[height=2.2cm,width=9.5cm]{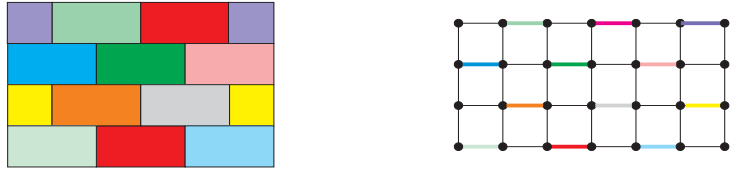}
\caption{\label{idu}A tiling of $C(4,6)$ which is an isolated vertex of $\mathcal{T}(C(4,6))$.}
\end{figure}
\end{remark}

\section{\normalsize The flip graph of quadriculated torus}
In this section, we will prove that $R_t(T(2n+1,2m))$ has exactly two components and such two components are isomorphic where $n\geq 1$ and $m\geq 2$.
Note that the flip graph and total resonance graph of $T(2n+1,2m)$ are isomorphic. As a corollary, we obtain corresponding result for flip graph of $T(2n+1,2m)$.

First we give some notations of quadriculated torus $T(2n+1,2m)$ for $n\geq 1$ and $m\geq 2$.
Since $T(2n+1,2m)$ is obtained from a $(2n+1)\times 2m$ chessboard by sticking left and right sides as well as top and bottom sides together, the vertices of $T(2n+1,2m)$ can be divided into $2n+1$ rows and $2m$ columns according to the arrangement of vertices in the chessboard. Denote the vertices of $T(2n+1,2m)$ by $(u_i,v_j)$ for $1\leq i \leq 2m$, $1\leq j\leq 2n+1$. Since the subgraph induced by the vertices of
$\{(u_i,v_j)|1\leq j\leq 2n+1\}$ is a cycle of length $2n+1$, we denote it by $C^i_{2n+1}$ for some $1\leq i\leq 2m$. Similarly, the subgraph induced by the vertices of
$\{(u_i,v_j)|1\leq i\leq 2m\}$ is denoted by $C^j_{2m}$ for some $1\leq j\leq 2n+1$.
\begin{figure}[h]
\centering
\includegraphics[height=4cm,width=7.5cm]{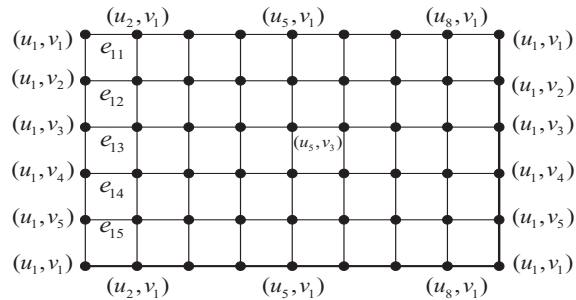}
\caption{\label{M42}Some notations of $T(5,8)$.}
\end{figure}
For $1\leq i\leq 2m$, let $E_i=\{e_{ij}=(u_i,v_j)(u_{i+1},v_j)|1\leq j\leq 2n+1\}$ where the subscript $i+1$ is considered modulo $2m$. Some notations of $T(5,8)$ are shown in Fig. \ref{M42}. In this drawing, each edge of $E_i$ is horizontal and we call it \emph{horizontal edge}. Define two perfect matchings of $T(2n+1,2m)$ as
 $$M_1=E_1\cup E_3\cup \cdots \cup E_{2m-1} \text{ and } M_2=E_2\cup E_4\cup \cdots\cup E_{2m}.$$

\begin{lem}\label{l1}$M_1$ and $M_2$ lie in different components of $R_t(T(2n+1,2m))$ for $n\geq 1$ and $m\geq 2$.
\end{lem}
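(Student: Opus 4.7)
The plan is to exhibit a $\mathbb{Z}/2$-valued invariant of the flip operation that takes different values on $M_1$ and $M_2$. Motivated by the fact that both matchings consist entirely of horizontal edges concentrated in disjoint sets $E_i$, I would define, for every perfect matching $M$ of $T(2n+1,2m)$,
$$\phi(M) := |M\cap E_1|\pmod{2}.$$

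First I would verify the face structure of $T(2n+1,2m)$: every facial cycle is a unit-square $4$-cycle whose two horizontal sides both lie in a single class $E_i$ (the top and bottom of the square, one from row $v_j$ and the other from row $v_{j+1}$, each joining columns $i$ and $i+1$), together with two vertical edges drawn from $C^i_{2n+1}$ and $C^{i+1}_{2n+1}$. The hypothesis $m\geq 2$ enters here to guarantee that the two horizontal sides are genuinely distinct edges. Consequently, a flip along such a facial cycle either removes both of its horizontal edges from $M$ and inserts the two vertical ones, or vice versa; in either case $|M\cap E_i|$ changes by $\pm 2$ and $|M\cap E_{i'}|$ is unchanged for every $i'\neq i$. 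In particular $|M\cap E_1|\pmod{2}$ is preserved by every flip, so $\phi$ is constant on each connected component of $R_t(T(2n+1,2m))$.

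To finish I would just compute the two values. Because the $E_i$ are pairwise disjoint, $M_1\cap E_1=E_1$, giving $|M_1\cap E_1|=2n+1$ (odd) and $\phi(M_1)=1$; similarly $M_2\cap E_1=\emptyset$, giving $\phi(M_2)=0$. These disagree, so $M_1$ and $M_2$ cannot lie in the same component of $R_t(T(2n+1,2m))$.

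I do not foresee any genuine obstacle in this argument. The only step deserving a little care is the face-structure claim, namely that every face of $T(2n+1,2m)$ is a $4$-cycle whose two horizontal edges belong to the same $E_i$; once that is in place the remainder is a one-line parity check. (The same invariant based on any $E_i$ would work equally well; one could phrase the proof more symmetrically by noting that the mod-$2$ vector $(\phi_1(M),\dots,\phi_{2m}(M))$ is a flip invariant, and that $M_1$ and $M_2$ realize the two different "alternating parity" vectors of length $2m$.)
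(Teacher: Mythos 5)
Your proof is correct. It rests on the same underlying parity obstruction as the paper's --- namely that every facial square of $T(2n+1,2m)$ meets $E_1$ in either $0$ or $2$ edges while $|E_1|=2n+1$ is odd --- but you package it differently. The paper argues by contradiction along a hypothetical flip path $M_2=M_1\oplus S_1\oplus\cdots\oplus S_k$: for each of the $2n+1$ edges $e_{1j}$ the two incident faces satisfy $\delta(f_{j-1})+\delta(f_j)\equiv 1\pmod 2$, and summing these congruences around the ring of faces between columns $1$ and $2$ counts each $\delta(f_j)$ twice, yielding $0\equiv 2n+1\equiv 1\pmod 2$. You instead isolate a single-flip invariant $\phi(M)=|M\cap E_1|\bmod 2$, observe it is constant on components, and evaluate it on $M_1$ and $M_2$; this is a cleaner and slightly more elementary formulation that avoids introducing the multiplicities $\delta(f)$ and the global double count, and as you note it extends at once to the full vector of $2m$ parities. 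One small slip in a side remark: the hypothesis $m\geq 2$ is not what makes the two horizontal sides of a face distinct (that only needs $2n+1\geq 2$, i.e.\ $n\geq 1$); its actual role is to keep the classes $E_1,\dots,E_{2m}$ pairwise disjoint and the quadrangulation non-degenerate (for $m=1$ one would have $E_1=E_2$ and hence $M_1=M_2$), which is what justifies $M_1\cap E_1=E_1$ and $M_2\cap E_1=\emptyset$. This does not affect the validity of your argument.
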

\begin{proof}Suppose to the contrary that $M_1$ and $M_2$ lie in some same component of $R_t(T(2n+1,2m))$. Then there is a path in $R_t(T(2n+1,2m))$ connecting $M_1$ and $M_2$. That is, $M_2=M_1\oplus S_1\oplus\cdots \oplus S_k$ where $S_i$ is a facial cycle of $T(2n+1,2m)$ for $i=1,2,\dots,k$.

For $1\leq j\leq 2n+1$, since $e_{1j}\in M_1$ but $e_{1j}\notin M_2$, $e_{1j}$ appears odd times in the sequence $S_1,S_2,\dots,S_k$. Let $f_{j-1}$ and $f_j$ be the two facial cycles of $T(2n+1,2m)$ having edge $e_{1j}$ in common (the subscripts modulo $2n+1$). Then $\delta(f_{j-1})+\delta(f_{j})\equiv 1$ (mod 2). Adding up $2n+1$ equations together, we have $2[\delta(f_{1})+\delta(f_{2})+\cdots+\delta(f_{2n+1})]\equiv 1$ (mod 2), which is a contradiction.
\end{proof}

For a matching $M$, the vertices incident to the edges of $M$ are \emph{saturated} by $M$, the others are \emph{unsaturated}. For a vertex subset $X$ of $V(G)$, the set of edges with exactly one end vertex in $X$ are written $\nabla(X)$. For a subgraph $G'$ of $G$, we write $\nabla(G')$ instead of $\nabla(V(G'))$.
Lemma \ref{l1} implies that $R_t(T(2n+1,2m))$ has at least two components. In the sequel, we will prove that $R_t(T(2n+1,2m))$ has exactly two components. First of all, we give a significant lemma as follows.

\begin{lem}\label{l3}Let $M$ be a perfect matching of $T(2n+1,2m)$. Then we can obtain a perfect matching $M'$ from $M$ by a series of flips such that $M'\cap E_i=\emptyset$ for some $1\leq i\leq 2m$.
\end{lem}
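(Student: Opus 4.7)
The plan is to use two flip invariants — the parities of the column-gap counts $b_i := |M \cap E_i|$ and the resulting cyclic alternation of these parities — together with minimisation of a potential over the flip-orbit of $M$.

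First I would observe that every flip changes exactly one $b_i$ by $\pm 2$ (namely, in the column gap containing the flipped face) and leaves the other $b_{i'}$ unchanged, so $b_i$ modulo $2$ is invariant under flips. Counting the matching incidences at column $i$ gives the identity $b_{i-1} + b_i + 2a_i = 2n+1$, where $a_i := |M \cap E(C^i_{2n+1})|$; hence $b_{i-1} + b_i$ is odd and the $b_i$-parities alternate cyclically. Therefore the set $S \subseteq \{1, \dots, 2m\}$ of indices with $b_i$ even is fixed throughout the flip-orbit, $|S| = m$, and since any $b_i$ of odd parity is at least $1$, achieving $M' \cap E_i = \emptyset$ must happen at some $i \in S$.

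Within the flip-component of $M$, I would pick a matching $M^*$ that minimises the potential $\Phi(M) := \sum_{i \in S} b_i(M)$. If $\Phi(M^*) = 0$, every $i \in S$ has $b_i(M^*) = 0$ and the lemma is proved. Otherwise one derives a contradiction by producing a flip sequence from $M^*$ that strictly decreases $\Phi$. If some $i_0 \in S$ has two adjacent edges $e_{i_0,j}, e_{i_0,j+1} \in M^*$ on the cycle $E_{i_0}$, the face $f_{i_0}^j$ has both its horizontal sides in $M^*$ and flipping it cuts $b_{i_0}$ by $2$, contradicting minimality. Otherwise, for every $i_0 \in S$ the edges of $M^* \cap E_{i_0}$ are pairwise non-adjacent on the cycle $E_{i_0}$; I would then choose $i_0 \in S$ with $b_{i_0} \ge 2$ together with two nearest edges $e_{i_0,j}$ and $e_{i_0,j+k}$ of $M^* \cap E_{i_0}$ with minimal gap $k \ge 2$. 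A local analysis of how each intermediate vertex $(u_{i_0}, v_{j+s})$ and $(u_{i_0+1}, v_{j+s})$, $1 \le s \le k-1$, is matched (vertically inside its own column, or horizontally into $E_{i_0-1}$ or $E_{i_0+1}$) should yield a short sequence of flips in the neighbouring column gaps that slides an edge of $M^* \cap E_{i_0}$ into position $j+1$, creating two adjacent edges and returning us to the easy case.

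The main obstacle is this final step: one must enumerate the local matching configurations in the $2 \times (k-1)$ strip between the two nearest edges and exhibit, in every case, an explicit rearranging flip sequence whose net effect on $\Phi$ is a strict decrease. A subtle point is that an auxiliary flip in a column gap $E_{i'}$ with $i' \notin S$ only alters $b_{i'}$ (leaving $\Phi$ unchanged); the accounting must confirm that such auxiliary flips are always available so that the composite sequence terminates with a strict $\Phi$-decrease, contradicting the minimality of $M^*$.
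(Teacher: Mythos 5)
Your invariant set-up is sound and matches the arithmetic underlying the paper's argument: a flip changes exactly one $b_i=|M\cap E_i|$ by $\pm 2$, the identity $b_{i-1}+b_i+2a_i=2n+1$ forces the parities of the $b_i$ to alternate around the $2m$ column gaps, and the target gap must indeed lie in the even-parity class $S$ (this is the same counting that proves Lemma \ref{l1}). The genuine gap is the last step, which you yourself flag as ``the main obstacle'': you never exhibit the flip sequence that decreases the potential, and that is where the entire content of the lemma lives. The difficulty is real, not just bookkeeping. Between two nearest edges $e_{i_0,j},e_{i_0,j+k}\in M^*\cap E_{i_0}$ the intermediate vertices of column $i_0$ need not be matched vertically among themselves: they can be matched into $E_{i_0-1}$, and the vertices of column $i_0+1$ into $E_{i_0+1}$; since those gaps have odd parity they are nonempty, and the obstruction can propagate through arbitrarily many further columns. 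So there is no a priori ``short sequence of flips in the neighbouring column gaps,'' and a case enumeration confined to the $2\times(k-1)$ strip cannot close the argument. A further structural worry: descending on $\Phi=\sum_{i\in S}b_i$ is awkward because the natural reducing move may remove two horizontal edges from a gap outside $S$ (leaving $\Phi$ unchanged), and driving $\Phi$ all the way to $0$ is actually a stronger statement than the lemma needs (one empty gap suffices).

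The paper avoids both problems by inducting on the \emph{total} number $l=\sum_i b_i$ of horizontal edges. If $l>m$, some column $j$ has $|M\cap\nabla(C^j_{2n+1})|\ge 3$, which yields a path $P^j$ on that column whose endpoints are matched into $E_j$ and whose interior is matched vertically. One then examines the parallel path $Q^{j+1}$ in the next column: if no matched edge leaves it horizontally, the two paths form a \emph{ladder} and an explicit sequence of $2t+1$ face flips ($T_1,\dots,T_t$ followed by $W_0,\dots,W_t$) reduces $l$ by $2$; otherwise one passes to a strictly shorter sub-path in column $j+1$ and repeats, so the process terminates. Finally, at $l=m$ the relation $b_{j-1}+b_j=1$ forces the sequence $b_1,\dots,b_{2m}$ to alternate $0,1,0,1,\dots$, so some $E_i$ is already empty. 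To complete your proof you would need to supply an argument of comparable strength for the descent step; as written, the proposal is an unproven reduction, not a proof.
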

\begin{proof}Let $l$ represent the number of horizontal edges in $M$. We will prove the lemma by induction on $l$.

For $1\leq j\leq 2m$, since $C^j_{2n+1}$ has $2n+1$ vertices, $|M\cap \nabla(C^j_{2n+1})|$ is odd. Thus we have $l\geq m$, and equality holds if and only if  $|M\cap \nabla(C^j_{2n+1})|=1$ for each $1\leq j\leq 2m$. For $l=m$, we have $|M\cap \nabla(C^2_{2n+1})|=1$. Then $M\cap E_1$ or $M\cap E_2$ is an empty set, and $M$ is the required perfect matching.

For $l\geq m+1$, we have $|M\cap \nabla(C^j_{2n+1})|\geq 3$ for some $1\leq j\leq 2m$. Since $|M\cap \nabla(C^j_{2n+1})|$ is odd and $M\cap \nabla(C^j_{2n+1})$ is contained in $E_{j-1}\cup E_j$ (the subscripts are modulo $2m$), there exists a path on $C^j_{2n+1}$ such that its two end vertices are saturated by $M\cap \nabla(C^j_{2n+1})$ and all intermediate vertices (if exists) are saturated by $M\cap E(C^j_{2n+1})$. Without loss of generality, we suppose that $P^j=(u_j,v_{1})(u_j,v_{2})\cdots(u_j,v_{2t+2})$ is such a path for some  $0\leq t\leq n-1$, and $(u_j,v_1)(u_{j+1},v_1)$ and $(u_j,v_{2t+2})(u_{j+1},v_{2t+2})$ are contained in $M\cap E_j$.
A \emph{ladder} is obtained by connecting each vertex of a path of length odd with the corresponding vertex on its copy with an edge. A ladder and its perfect matching shown in Fig. \ref{M2}(a) together form a \emph{ladder structure} where the boundary of the ladder form an $M$-alternating cycle. Next we prove that such ladder structure exists.

Let $Q^{j+1}=(u_{j+1},v_{2})(u_{j+1},v_{3})\cdots(u_{j+1},v_{1+2t})$ be a path of length $2t$ on $C^{j+1}_{2n+1}$.
Note that $|M\cap \nabla(Q^{j+1})|\geq 0$ is even.
If $|M\cap \nabla(Q^{j+1})|=0$ (see Fig. \ref{M2}(b)), then $\{(u_{j+1},v_{2})(u_{j+1},\\v_{3}),(u_{j+1},v_{4})(u_{j+1},v_{5}),\dots,(u_{j+1},v_{2t})(u_{j+1},v_{1+2t})\}\subseteq M$. Thus the subgraph induced by $V(P^j)\cup V(Q^{j+1})\cup \{(u_{j+1},v_1),(u_{j+1},v_{2+2t})\}$ form a ladder structure. Let $$T_x=(u_{j},v_{2x})(u_{j+1},v_{2x})(u_{j+1},v_{1+2x})(u_{j},v_{1+2x})(u_{j},v_{2x})\text{ for }1\leq x\leq t$$ be $t$ $M$-alternating facial cycles of $T(2n+1,2m)$, and let $$W_x=(u_{j},v_{1+2x})(u_{j+1},v_{1+2x})(u_{j+1},v_{2+2x})(u_{j},v_{2+2x})(u_{j},v_{1+2x})\text{ for }0\leq x\leq t$$ be $t+1$ $M\oplus T_1\oplus \cdots \oplus T_t$-alternating facial cycles of $T(2n+1,2m)$.
Let $ M''=M\oplus T_1\oplus \cdots \oplus T_t\oplus W_0\oplus \cdots \oplus W_t$ (see Fig. \ref{M2}(c)).
Then $M''$ is a perfect matching of $T(2n+1,2m)$ such that $l''=l-2$.

If $|M\cap \nabla(Q^{j+1})|>0$, then we assume that $(u_{j+1},v_{s_1})(u_{j+2},v_{s_1})$ and $(u_{j+1},v_{s_2})(u_{j+2},v_{s_2})$ are the two edges of $M\cap \nabla(Q^{j+1})$ such that $s_1$ and $s_2$ are subscripts of $v$ smaller than the other subscripts in edges of $M\cap \nabla(Q^{j+1})$ where $2\leq s_1,s_2\leq 2t+1$. Let $P^{j+1}$ be the sub-path of $Q^{j+1}$ with both end vertices being $(u_{j+1},v_{s_1})(u_{j+2},v_{s_1})$ and $(u_{j+1},v_{s_2})(u_{j+2},v_{s_2})$.
\begin{figure}[h]
\centering
\includegraphics[height=7.4cm,width=14.5cm]{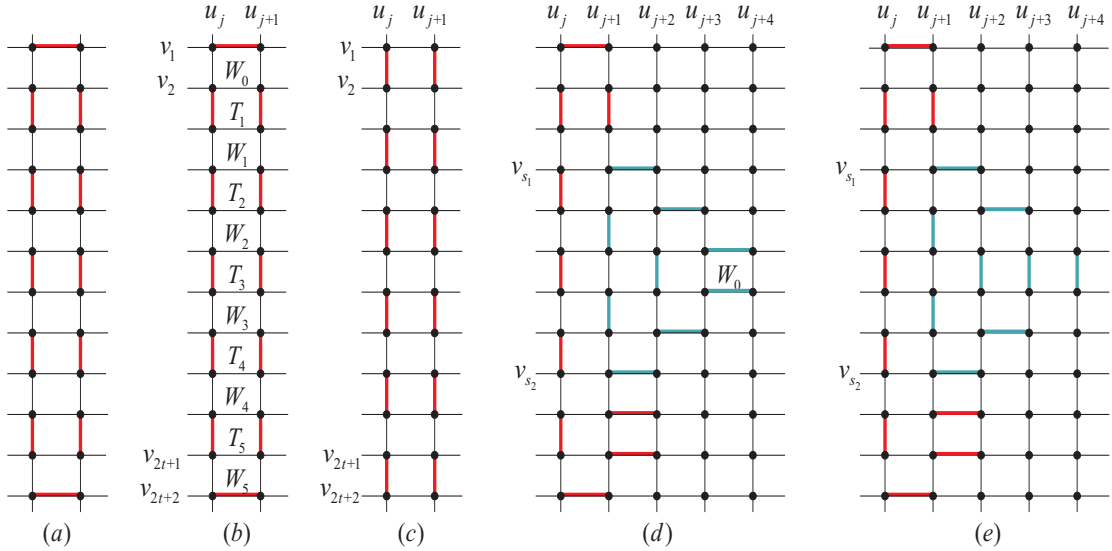}
\caption{\label{M2}Perfect matchings $M$ and $M''$ (shown as bold lines) in the proof of Lemma \ref{l3}.}
\end{figure}
Replacing $P^{j}$ with $P^{j+1}$, we repeat the above process. Since vertices on one row are finite, we will get some $Q^l$ such that $|M\cap \nabla(Q^{l})|=0$ shown in Fig. \ref{M2}(d). That is to say, there is a ladder structure. As we have proved above, we can obtain a perfect matching $M''$ shown in Fig. \ref{M2}(e) from $M$ by a series of flips such that $l''=l-2$.

By the induction hypothesis, we can obtain a perfect matching $M'$ from $M''$ by a series of flips such that $M'\cap E_i=\emptyset$ for some $1\leq i\leq 2m$. Since $M'$ is also obtained from $M$ by a series of flips, we obtain the required result.
\end{proof}

For an edge subset $F$ of $G$, we write $G-F$ for the subgraph of $G$ obtained by deleting set of edges $F$.
In the following, we will prove the main result.

\begin{thm}\label{t2}For $n\geq1$ and $m\geq 2$, $R_t(T(2n+1,2m))$ has exactly two components.
\end{thm}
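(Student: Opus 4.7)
The plan is to combine Lemma \ref{l1}, which already exhibits the two matchings $M_1$ and $M_2$ in distinct components, with a reachability argument showing that every perfect matching of $T(2n+1,2m)$ is flip-equivalent to either $M_1$ or $M_2$. Together these force the number of components to be exactly two.

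Given an arbitrary perfect matching $M$, I would first apply Lemma \ref{l3} to obtain, in the same component of $R_t(T(2n+1,2m))$ as $M$, a perfect matching $M'$ with $M'\cap E_i=\emptyset$ for some $i\in\{1,\ldots,2m\}$. The key structural observation is that the subgraph $G_i:=T(2n+1,2m)-E_i$ is isomorphic, as an abstract graph, to $C(2m-1,2n+1)$: deleting the $2n+1$ horizontal edges of $E_i$ unrolls the torus into a cylinder whose $2m$ columns still form $(2n+1)$-cycles and whose $2n+1$ rows become paths of length $2m-1$. The bounded $4$-faces of the natural planar embedding of this cylinder coincide exactly with those square facial cycles of $T(2n+1,2m)$ that avoid $E_i$; the only additional faces are the two ``end'' cycles $C^i_{2n+1}$ and $C^{i+1}_{2n+1}$, which have odd length $2n+1$ and therefore cannot be the symmetric difference of any two perfect matchings. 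Consequently every flip legal inside $G_i$ is also a flip inside $T(2n+1,2m)$, and the two flip graphs $R_t(G_i)$ and $R_t(C(2m-1,2n+1))$ agree.

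By Corollary \ref{thm1}, $R_t(C(2m-1,2n+1))$ is connected, so any two perfect matchings of $G_i$ are linked by a sequence of $4$-face flips, each of which is a legal flip in $T(2n+1,2m)$. From the definitions one has $M_1\cap E_i=\emptyset$ exactly when $i$ is even and $M_2\cap E_i=\emptyset$ exactly when $i$ is odd, so precisely one of $M_1,M_2$ is itself a perfect matching of $G_i$, determined by the parity of $i$. Hence $M'$, and therefore $M$, lies in the same component of $R_t(T(2n+1,2m))$ as that matching. Combined with Lemma \ref{l1}, this shows $R_t(T(2n+1,2m))$ has exactly two components.

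The main obstacle is the clean identification $G_i\cong C(2m-1,2n+1)$ together with the verification that their $4$-face flip structures agree (so that Corollary \ref{thm1} can be invoked verbatim); once this structural reduction is in place, the rest of the argument is a short combination of Lemma \ref{l3} with the parity alternation between $M_1$ and $M_2$.
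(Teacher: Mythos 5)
Your proposal is correct and follows essentially the same route as the paper's proof: Lemma \ref{l1} for the lower bound, Lemma \ref{l3} to reach a matching avoiding some $E_i$, the identification of $T(2n+1,2m)-E_i$ with the cylinder $C(2m-1,2n+1)$, Corollary \ref{thm1} for connectivity there, and the parity of $i$ to decide which of $M_1,M_2$ is reached. Your extra remark that the two odd end faces of the cylinder can never arise as a symmetric difference of perfect matchings is a slightly more explicit justification of the step the paper phrases as ``each face in $G'$ corresponds to a face of $T(2n+1,2m)$,'' but the argument is the same.
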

\begin{proof}By Lemma \ref{l1}, $R_t(T(2n+1,2m))$ has at least two components containing $M_1$ and $M_2$ separately. It suffices to prove that $R_t(T(2n+1,2m))$ has a path from $M_1$ or $M_2$ to any other vertex.

Let $M$ be a perfect matching of $T(2n+1,2m)$ different from $M_1$ and $M_2$. By Lemma \ref{l3}, we can obtain a perfect matching $M'$ from $M$ by some flips such that $M'\cap E_i=\emptyset$ for some $1\leq i\leq 2m$. Let $G'=T(2n+1,2m)-E_i$. Then $G'$ is a graph obtained from a $(2n+1)\times (2m-1)$ chessboard by sticking top and bottom sides together. So $G'$ is a quadriculated cylinder $C(2m-1,2n+1)$.

Since $M'\cap E_i=\emptyset$, $M'$ is a perfect matching of $G'$. If $i$ is odd (resp. even), then $M_2$ (resp. $M_1$) is a perfect matching of $G'$. We assume that $M_j$ is a perfect matching of $G'$ for some $j\in\{1,2\}$. By Corollary \ref{thm1}, $R_t(G')$ is connected. So $R_t(G')$ has a path between $M'$ and $M_j$. Since $G'$ is quadriculated, $M_j$ is obtained from $M'$ by some flips. Combining that each face in $G'$ corresponds to a face of $T(2n+1,2m)$, we have that $M_j$ is obtained from $M'$ by some flips in $T(2n+1,2m)$. Since $M_j$ is also obtained from $M$ by some flips in $T(2n+1,2m)$, $R_t(T(2n+1,2m))$ has a path between $M_j$ and $M$. \end{proof}

Let $H_i$ be the component of $R_t(T(2n+1,2m))$ containing $M_i$ for $i=1,2$.
\begin{thm}\label{t3}For $n\geq1$ and $m\geq 2$, the two components $H_1$ and $H_2$ of $R_t(T(2n+1,2m))$ are isomorphic.
\end{thm}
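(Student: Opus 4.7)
The plan is to exhibit an explicit graph automorphism of $T(2n+1,2m)$ that swaps $M_1$ and $M_2$, and then lift it to an isomorphism between $H_1$ and $H_2$. The natural candidate is the ``column shift'' $\sigma$ defined on vertices by $\sigma((u_i,v_j)) = (u_{i+1},v_j)$, with the subscript $i+1$ taken modulo $2m$.

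First I would verify that $\sigma$ is indeed an automorphism of the abstract graph $T(2n+1,2m)$: since adjacency in the torus depends only on the difference of the first indices (mod $2m$) and the difference of the second indices (mod $2n+1$), translation in the first coordinate preserves edges. Moreover, $\sigma$ sends each facial 4-cycle to another facial 4-cycle, because the torus embedding is invariant under this shift. Extending $\sigma$ edgewise, one sees $\sigma(E_i) = E_{i+1}$, and therefore
\[
\sigma(M_1) \;=\; \sigma(E_1\cup E_3\cup\cdots\cup E_{2m-1}) \;=\; E_2\cup E_4\cup\cdots\cup E_{2m} \;=\; M_2.
\]

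Next, I would lift $\sigma$ to a map $\widetilde{\sigma}$ on perfect matchings by $M \mapsto \sigma(M)$, and argue that this is an automorphism of $R_t(T(2n+1,2m))$. The key point is that for any perfect matching $M$ and any facial cycle $S$ of $T(2n+1,2m)$,
\[
\sigma(M \oplus S) \;=\; \sigma(M) \oplus \sigma(S),
\]
and $\sigma(S)$ is again a facial cycle. Hence $M \sim M'$ in $R_t$ (i.e.\ $M \oplus M' = \partial f$ for some face $f$) if and only if $\widetilde{\sigma}(M) \sim \widetilde{\sigma}(M')$. So $\widetilde{\sigma}$ is a bijection on $V(R_t(T(2n+1,2m)))$ preserving the edge relation, hence an automorphism of $R_t$.

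Finally, since $\widetilde{\sigma}(M_1) = M_2$ and automorphisms of a graph send connected components to connected components of the same order, the restriction of $\widetilde{\sigma}$ to $H_1$ is an isomorphism onto the component of $R_t(T(2n+1,2m))$ containing $M_2$, which by Theorem~\ref{t2} is exactly $H_2$. No real obstacle is expected here; the whole argument is essentially the observation that column-translation is a symmetry of the torus exchanging the two ``parities'' distinguished in Lemma~\ref{l1}, and symmetries of the underlying graph always descend to symmetries of the resonance graph.
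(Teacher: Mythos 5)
Your proposal is correct and follows essentially the same route as the paper: the paper's proof uses exactly this column-shift automorphism $\varphi((u_i,v_j))=(u_{i+1},v_j)$ (mod $2m$), verifies $\varphi(M_1)=M_2$, and checks that $\varphi$ commutes with the symmetric-difference flip along facial cycles, so that it restricts to an isomorphism from $H_1$ onto $H_2$. No differences worth noting.
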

\begin{proof}Define $\varphi((u_i,v_j))=(u_{i+1},v_j)$ for $1\leq i\leq 2m$ and $1\leq j\leq 2n+1$ where the subscript $i+1$ is considered modulo $2m$. Clearly, $\varphi$ is a translation of such torus and thus an automorphism on $T(2n+1,2m)$ which preserves the faces. Next we will prove that $\varphi$ induces an isomorphism from $H_1$ to $H_2$.

Since translating a perfect matching of $T(2n+1,2m)$ by length of one edge horizontally to the left or right, we still obtain a perfect matching of $T(2n+1,2m)$.
Thus, $\varphi$ induces a bijection on the set of perfect matchings. Since $\varphi(M_1)=M_2$, it suffices to prove that $\varphi$ preserves flip operation.

For $M^1M^2\in E(H_1)$, we have $M^2=M^1\oplus S$ for a facial cycle $S$ of $T(2n+1,2m)$. Since $\varphi(M_1-S)=\varphi(M_1)-\varphi(S)$, we have
\begin{eqnarray*}
\varphi(M^2)&=&\varphi((M^1-S)\cup(S-M_1))\\
            &=&\varphi(M^1-S)\cup \varphi(S-M^1)\\
            &=&(\varphi(M^1)-\varphi(S))\cup (\varphi(S)-\varphi(M_1))\\
            &=&\varphi(M_1)\oplus \varphi(S).
\end{eqnarray*}
Since $\varphi(S)$ is also a facial cycle of $T(2n+1,2m)$, we have $\varphi(M^1)\varphi(M^2)\in E(H_2)$.
For $\varphi(M^1)\varphi(M^2)\in E(H_2)$, we have $\varphi(M^2)=\varphi(M^1)\oplus S'$ for a facial cycle $S'$ of $T(2n+1,2m)$. Since $\varphi(M^2)=\varphi(M^1)\oplus S'=\varphi(M^1\oplus \varphi^{-1}(S'))$ by a similar proof as above, and $\varphi$ is an injective, we obtain that $M^2=M^1\oplus \varphi^{-1}(S')$. Since $\varphi^{-1}(S')$ is also a facial cycle of $T(2n+1,2m)$, we have $M^1M^2\in E(H_1)$.
\end{proof}
Two perfect matchings $M^1$ and $M^2$ of a graph $G$ are called \emph{equivalent} if there is an automorphism $\varphi$ of $G$ such that $\varphi{(M^1)}=M^2$. By proof of Theorem \ref{t3}, we obtain the following result.
\begin{cor}\label{4.5} For any perfect matching $M\in V(H_1)$, we have $\varphi(M)\in V(H_2)$, and $M$ and $\varphi(M)$ are equivalent.
\end{cor}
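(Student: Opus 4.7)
The plan is to observe that both assertions of the corollary are essentially repackagings of what has already been established in the proof of Theorem \ref{t3}; no new technical work is required, only a careful extraction. I would split the statement into its two parts and verify each by pointing to the relevant content in the preceding proof.

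For the first part, that $\varphi(M)\in V(H_2)$ whenever $M\in V(H_1)$, I would cite the conclusion of Theorem \ref{t3}: the map $\varphi$ restricts to a graph isomorphism from $H_1$ onto $H_2$. In particular its vertex map sends $V(H_1)$ bijectively onto $V(H_2)$, so $\varphi(M)$ is automatically a vertex of $H_2$. No additional calculation is needed beyond invoking this isomorphism. (If one wanted to be self-contained, one could also rerun the short argument: $\varphi$ is a bijection on the set of all perfect matchings of $T(2n+1,2m)$, it sends the flip-adjacency relation to itself since faces are preserved, and $\varphi(M_1)=M_2$, so connected components map to connected components and $H_1$ must land in $H_2$.)

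For the second part, that $M$ and $\varphi(M)$ are equivalent, I would unfold the definition of equivalence: two perfect matchings $M^1,M^2$ of a graph $G$ are equivalent if there exists an automorphism of $G$ carrying $M^1$ to $M^2$. The map $\varphi$ is already verified in the first paragraph of the proof of Theorem \ref{t3} to be a translation of the torus, hence an automorphism of $T(2n+1,2m)$; it carries $M$ to $\varphi(M)$ tautologically. Therefore $M$ and $\varphi(M)$ are equivalent by definition.

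There is no genuine obstacle here; the only point worth flagging is that the \emph{same} map $\varphi$ is playing two distinct roles, namely an automorphism of the ambient graph $T(2n+1,2m)$ (which is what the notion of equivalence of perfect matchings requires) and simultaneously an isomorphism of resonance-graph components $H_1 \to H_2$ (which is what places $\varphi(M)$ in $V(H_2)$). Once this double role is made explicit, the corollary is immediate.
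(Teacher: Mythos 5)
Your proposal is correct and matches the paper's intent exactly: the paper derives this corollary directly from the proof of Theorem \ref{t3}, using precisely the two facts you isolate, namely that $\varphi$ induces an isomorphism $H_1\to H_2$ (so $\varphi(M)\in V(H_2)$) and that $\varphi$ is an automorphism of $T(2n+1,2m)$ carrying $M$ to $\varphi(M)$ (so they are equivalent by definition). Your remark about the double role of $\varphi$ is a fair and accurate reading of what the paper leaves implicit.
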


Since the inner dual of $T(2n+1,2m)$ is itself, $\mathcal{T}(T(2n+1,2m))$ is isomorphic to $R_t(T(2n+1,2m))$. By Theorems \ref{t2} and \ref{t3}, we obtain a  corollary as follows.
\begin{cor}For $n\geq 1$ and $m\geq 2$, $\mathcal{T}(T(2n+1,2m))$ has exactly two isomorphic components.
\end{cor}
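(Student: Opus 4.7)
The plan is to reduce the statement to the two theorems just proved (\ref{t2} and \ref{t3}) by invoking the standard correspondence between tilings and perfect matchings of the inner dual that was set up in the introduction. Concretely, I would first observe that because $T(2n+1,2m)$ is a torus of quadriculated squares, every face is a $4$-cycle and the inner dual $T(2n+1,2m)^{\#}$ is the torus $T(2n+1,2m)$ itself (as explicitly recorded in the introduction for $T(m,n)$). Under this identification, a domino tiling of $T(2n+1,2m)$ corresponds to a perfect matching of $T(2n+1,2m)$, and a flip of two side-by-side dominoes corresponds to the symmetric difference of a perfect matching with a facial $4$-cycle, i.e.\ precisely the edge relation defining $R_t(T(2n+1,2m))$.

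Next I would spell out this correspondence as a graph isomorphism $\Phi \colon \mathcal{T}(T(2n+1,2m)) \to R_t(T(2n+1,2m))$: $\Phi$ sends each tiling to the set of dual edges across shared sides of its dominoes, which is a perfect matching of the inner dual; and $\Phi$ preserves adjacency because a flip on the tiling side corresponds exactly to rotation along a facial cycle on the matching side. Bijectivity follows because the inverse operation (turning a perfect matching of the inner dual into a tiling of the torus) is well-defined and inverse to $\Phi$. Since every face of $T(2n+1,2m)$ is a $4$-face, allowing \emph{all} facial cycles (as $R_t$ does) matches the fact that every flip is admissible, so no face needs to be excluded.

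With the isomorphism in hand, the corollary is immediate: Theorem \ref{t2} says $R_t(T(2n+1,2m))$ has exactly two components, and Theorem \ref{t3} says those two components $H_1$ and $H_2$ are isomorphic; transporting this through $\Phi$ gives that $\mathcal{T}(T(2n+1,2m))$ has exactly two isomorphic components. I do not expect a real obstacle here, since the argument is purely a dictionary translation; the one point that deserves explicit mention is that $T(2n+1,2m)^{\#}=T(2n+1,2m)$ so that Theorems \ref{t2} and \ref{t3} apply to precisely the object at hand, rather than to some auxiliary graph.
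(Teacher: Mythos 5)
Your proposal is correct and follows exactly the paper's route: the paper likewise notes that the inner dual of $T(2n+1,2m)$ is itself, so $\mathcal{T}(T(2n+1,2m))$ is isomorphic to $R_t(T(2n+1,2m))$, and then cites Theorems \ref{t2} and \ref{t3}. You merely spell out the dictionary between flips and rotations along facial cycles in more detail than the paper does.
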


A graph is  \emph{trivial} if it contains only one vertex; Otherwise, it is \emph{non-trivial}. It is more complicated for components of $R_t(T(2n,2m))$ for $n\geq 2$ and $m\geq 2$.
\begin{remark}$R_t(T(4,4))$ has 17 components, 12 of which are trivial.

{\rm Calculating by computer, we obtain that $T(4,4)$ has 272 perfect matchings. By analysing flip operations for each perfect matching, we get that $R_t(T(4,4))$ has twelve trivial components and five non-trivial components.}
\begin{figure}[h]
\centering
\includegraphics[height=3.4cm,width=3.6cm]{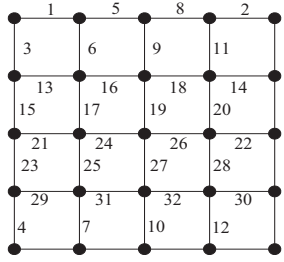}
\caption{\label{c4c4}Labels of edges of $T(4,4)$.}
\end{figure}

{\rm Label edges of $T(4,4)$ as in Fig. \ref{c4c4}. Then twelve trivial components are $M_{14}=\{1, 8, 14,\\ 16, 21, 26, 30, 31\}$, $M_{72}= \{2, 5, 13,18, 22, 24, 29, 32\}$, $M_{196}=\{3, 7, 9, 12, 17, 20, 23, 27\}$,  $M_{237}=\{4, 6, 10, 11, 15, 19, 25, 28\}$, $M_{49}=\{1, 9, 12, 14, 17, 23, 26, 31\}$, $M_{55}= \{1, 10, 11,15,\\ 16, 25, 26, 30\}$, $M_{116}=\{2, 6, 10, 15, 18, 24, 28, 29\}$,  $M_{120}=\{2, 7, 9, 13, 20, 23, 24, 32\}$, $M_{152}\\=\{3, 5, 12, 17, 18, 22, 27, 29\}$, $M_{187}= \{3, 7, 8, 16, 20, 21, 27, 30\}$, $M_{206}=\{4, 5, 11, 13, 19, 22,\\ 25, 32\}$,  $M_{223}=\{4, 6, 8, 14, 19, 21, 28, 31\}$. The
five non-trivial components contain respectively $M_1=\{1, 8, 13, 18, 21, 26, 29, 32\}$, $M_{2}=\{1, 8, 13, 18, 21, 26, 30, 31\}$,  $M_{69}=\{2, 5, 13,\\ 18, 21, 26, 29, 32\}$, $M_{178}=\{3, 6, 9, 12, 20, 23, 25, 27\}$, $M_{181}=\{3, 6, 10, 11, 19, 23, 25, 28\}$ where the component containing $M_1$ has 132 vertices and each other component has 32 vertices.}
\end{remark}

\section{\normalsize The forcing numbers of domino tilings}
For a quadriculated region, the forcing number of a tiling is equal to that of the corresponding perfect matching in its inner dual.
First, we give some concepts about the forcing number of a perfect matching, which was first introduced by Harary et al. \cite{2} and by Klein and Randi\'{c} \cite{3} in chemical literatures and which has important applications in resonance theory.
Let $M$ be a perfect matching of a graph $G$. A subset $S\subseteq M$ is called a \emph{forcing set} of $M$ if $S$ is not contained in any other perfect matching. The smallest cardinality of a forcing set of $M$ is called the \emph{forcing number} of $M$, denoted by $f(G,M)$.
 Afshani et al. \cite{5} obtained the following result.
\begin{lem}\cite{5} \label{4.3} If $M$ is a perfect matching of $G$ and $C$ is an $M$-alternating cycle of length 4, then $$|f(G,M\oplus E(C))-f(G,M)|\leq 1.$$
\end{lem}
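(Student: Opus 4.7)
My plan is to use the standard reformulation of ``forcing set'' in terms of alternating cycles, and then to carefully exchange two edges. Specifically, I would first recall (or quickly prove) the well-known characterization: a subset $S\subseteq M$ is a forcing set of $M$ if and only if every $M$-alternating cycle of $G$ contains at least one edge of $S$. (One direction: if some $M$-alternating cycle $D$ misses $S$, then $M\oplus E(D)$ is another perfect matching containing $S$; the other direction: if $M''\neq M$ contains $S$, the symmetric difference $M\oplus M''$ decomposes into $M$-alternating cycles, none of which can meet $S$.) This reduces the forcing number to a hitting-set problem over alternating cycles.

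Now let $C=v_1v_2v_3v_4v_1$ with $M\cap E(C)=\{e_1,e_2\}=\{v_1v_2,v_3v_4\}$ and $M'\cap E(C)=\{e_1',e_2'\}=\{v_2v_3,v_4v_1\}$, where $M'=M\oplus E(C)$. Let $S$ be a minimum forcing set of $M$, so $|S|=f(G,M)$. Since $C$ itself is $M$-alternating, $S\cap\{e_1,e_2\}\neq\emptyset$, so $|S\cap E(C)|\in\{1,2\}$. Define
\[
S' \;:=\; \bigl(S\setminus\{e_1,e_2\}\bigr)\cup\{e_1',e_2'\}\;\subseteq\; M'.
\]
Then $|S'|\leq |S|-1+2=|S|+1$. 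The heart of the argument is to show $S'$ is a forcing set of $M'$.

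Take any $M'$-alternating cycle $C^*$. If $E(C^*)\cap E(C)=\emptyset$, then because $M$ and $M'$ coincide off $E(C)$, $C^*$ is also $M$-alternating, hence meets $S$; since $C^*$ avoids $E(C)$, this intersection lies in $S\setminus E(C)\subseteq S'$. Otherwise $C^*$ contains some vertex $v_i$ of $C$; but $v_i$ has a unique $M'$-partner (either $v_{i-1}$ or $v_{i+1}$ along $C$), so the unique $M'$-edge incident to $v_i$ belongs to $\{e_1',e_2'\}$, and this edge must appear on $C^*$ by $M'$-alternation. In either case $C^*\cap S'\neq\emptyset$. By the characterization, $S'$ is a forcing set of $M'$, so $f(G,M')\leq |S'|\leq f(G,M)+1$. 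The cycle $C$ is equally well $M'$-alternating with $M=M'\oplus E(C)$, so the same argument with the roles of $M,M'$ swapped gives $f(G,M)\leq f(G,M')+1$, and the desired inequality follows.

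The only place requiring care is the case where the $M'$-alternating cycle $C^*$ touches $E(C)$; one might be tempted to analyse $C^*\oplus C$ as an auxiliary $M$-alternating subgraph, which can split into several cycles and becomes messy. The obstacle is avoided cleanly by the ``unique partner'' observation in the previous paragraph, which forces one of $e_1',e_2'$ onto $C^*$ without any structural analysis of $C^*$ itself. Once this is in place the rest is purely bookkeeping.
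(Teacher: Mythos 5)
The paper does not prove this lemma; it is quoted directly from Afshani, Hatami and Mahmoodian \cite{5}, so there is no internal proof to compare against. Your argument is correct and self-contained: the hitting-set characterization of forcing sets is the right tool, the bound $|S'|\leq|S|+1$ holds because $C$ itself is $M$-alternating and so $S$ already contains one of $e_1,e_2$, and the unique-partner observation correctly forces $e_1'$ or $e_2'$ onto any $M'$-alternating cycle meeting $C$ (your edge-based case split also covers cycles that pass through a vertex of $C$ without using an edge of $C$, since those fall into your first case). This, together with the symmetric application to $M'$ and $M=M'\oplus E(C)$, gives a complete proof of the cited inequality.
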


The \emph{minimum} (resp. \emph{maximum}) \emph{forcing number} of $G$ is the minimum (resp. maximum) values of $f(G,M)$ over all perfect matchings $M$ of $G$, denoted by $f(G)$ and $F(G)$, respectively.
Afshani et al. \cite{5} showed that $F(C(2k-1,2n))=kn$ and $F(C(2k,2n))=kn+1$, and proposed an open problem: what is the maximum forcing number of non-bipartite graph $C(2m-1,2n+1)$? Jiang and Zhang \cite{29} solved the problem and obtained that $F(C(2m-1,2n+1))=(n+1)m$.

The \emph{forcing spectrum} of $G$ is the set of forcing numbers of all perfect matchings of $G$, which is denoted by $\text{Spec}(G)$. If $\text{Spec}(G)$ is an integer interval, then we say it is \emph{continuous}.
By proving that the resonance graph of a polyomino is connected, Zhang and Jiang \cite{41} obtained that its forcing spectrum is continuous.

In this section, we give some results about the minimum and maximum forcing numbers of  $C(2m-1,2n+1)$ and $T(2m,2n+1)$, and prove that their forcing spectra are continuous. Hence we obtain some corresponding results of forcing numbers of domino tilings in $2m\times (2n+1)$ quadriculated cylinder and torus.

\subsection{\normalsize The forcing numbers of domino tilings of $T(2n+1,2m)$}
In this subsection, we obtain the maximum forcing number of $T(2n+1,2m)$ by the method of Kleinerman \cite{16} and prove that its forcing spectrum is continuous. As a corollary, we obtain some results of forcing numbers of domino tilings in $T(2n+1,2m)$. For an edge subset $S$ of a graph $G$, we use $V(S)$ to denote the set of all end vertices of edges in $S$.

\begin{thm}\label{Max1} $F(T(2n+1,2m))=(n+1)m$.
\end{thm}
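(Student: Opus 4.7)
The plan is to establish $F(T(2n+1,2m)) = (n+1)m$ by proving the lower and upper bounds separately, following the approach of Kleinerman as indicated in the text.

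For the lower bound $F(T(2n+1,2m)) \geq (n+1)m$, I would exhibit an explicit perfect matching $M^*$ whose forcing number attains $(n+1)m$. A natural candidate is the matching consisting of $n$ vertical dominoes per column (covering rows $v_1,v_2,\ldots,v_{2n}$ in consecutive pairs) together with the horizontal edges $e_{1,2n+1}, e_{3,2n+1}, \ldots, e_{2m-1,2n+1}$ to cover row $v_{2n+1}$. For this $M^*$, the 4-faces $T_{i,2k-1}$ with $1 \leq i \leq 2m$ and $1 \leq k \leq n$ are $M^*$-alternating cycles (each contains two vertical edges of $M^*$), and the row cycle $C^{2n+1}_{2m}$ is another $M^*$-alternating cycle sharing no $M^*$-edge with the 4-faces. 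I would then identify a maximum family of pairwise $M^*$-edge-disjoint $M^*$-alternating cycles, combining these 4-faces with additional long alternating cycles that wrap around the torus through the horizontal matching in row $v_{2n+1}$ together with vertical dominoes in distinct columns, so as to obtain $(n+1)m$ such cycles. Since every forcing set of $M^*$ must contain at least one distinct $M^*$-edge per edge-disjoint alternating cycle, this yields $f(T(2n+1,2m), M^*) \geq (n+1)m$.

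For the upper bound $F(T(2n+1,2m)) \leq (n+1)m$, I would show that every perfect matching $M$ admits a forcing set of size at most $(n+1)m$. The cleanest strategy is a direct construction: partition the $2m$ columns into $m$ consecutive column pairs $\{C^{2k-1}_{2n+1}, C^{2k}_{2n+1}\}$, and in each pair select $n+1$ edges of $M$ (one per ``row block'' of the pair, chosen according to the local structure of $M$) to include in the forcing set. The selection is designed so that, within each column pair, the remaining portion of $M$ is forced once the chosen edges are fixed; collecting across the $m$ pairs yields a forcing set of size $(n+1)m$. An alternative route uses Lemma \ref{l3} to flip $M$ into a matching $M'$ satisfying $M' \cap E_i = \emptyset$ for some $i$, so that $M'$ becomes a perfect matching of the subgraph $T(2n+1,2m) - E_i$, a quadriculated cylinder to which the Jiang--Zhang bound $F(C(2m-1,2n+1)) = (n+1)m$ applies; however, since each flip can change the forcing number (Lemma \ref{4.3}), this reduction must be supplemented by a careful argument transferring the cylinder's forcing set back to the torus.

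The main obstacle is the tight count $(n+1)m$ in the lower bound. A straightforward maximum edge-disjoint family of $M^*$-alternating 4-faces, together with $C^{2n+1}_{2m}$, yields only $mn + 1$ cycles, falling short of $(n+1)m = mn + m$ by $m-1$. Closing this gap requires exhibiting $m-1$ additional long $M^*$-alternating cycles whose $M^*$-edges are disjoint from those used by the 4-face family; the natural candidates traverse the horizontal edges in row $v_{2n+1}$ together with selected vertical dominoes in distinct columns, and verifying edge-disjointness demands careful combinatorial bookkeeping exploiting the interaction between the vertical and horizontal parts of $M^*$. A secondary difficulty is that $T(2n+1,2m)$ is neither planar nor bipartite, so standard forcing-number theorems (such as the Pachter--Kim identity relating $f(G,M)$ to the maximum number of disjoint alternating cycles) are not directly available, and the arguments must be carried out intrinsically on the toroidal structure.
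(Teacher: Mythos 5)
Your proof has a fatal gap in the lower bound, both in the choice of extremal matching and in the method. First, the method: every perfect matching of $T(2n+1,2m)$ has exactly $(2n+1)m=2nm+m$ edges, while $(n+1)m$ pairwise $M^*$-edge-disjoint $M^*$-alternating cycles would consume at least $2(n+1)m=2nm+2m$ edges of $M^*$; so the family you are looking for cannot exist, for \emph{any} perfect matching. The ``careful combinatorial bookkeeping'' you defer can never close the gap of $m-1$. Second, the matching itself is wrong: your $M^*$ does not attain the maximum forcing number. Take $n=1$, $m=2$, so $G=T(3,4)$ and $(n+1)m=4$. Here $M^*$ consists of the vertical edges $(u_i,v_1)(u_i,v_2)$ for $i=1,2,3,4$ and the horizontal edges $(u_1,v_3)(u_2,v_3)$, $(u_3,v_3)(u_4,v_3)$. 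The three edges $(u_1,v_1)(u_1,v_2)$, $(u_3,v_1)(u_3,v_2)$, $(u_1,v_3)(u_2,v_3)$ already force $M^*$: once they are fixed, $(u_3,v_3)$ has only $(u_4,v_3)$ left as a partner, whence $(u_4,v_1)(u_4,v_2)$ and then $(u_2,v_1)(u_2,v_2)$ are forced. So $f(T(3,4),M^*)\le 3<4$. The paper instead takes the all-horizontal matching $M_1=E_1\cup E_3\cup\cdots\cup E_{2m-1}$ and decomposes the torus into $m$ vertex-disjoint odd prisms $G^i=G[V(C^{2i-1}_{2n+1})\cup V(C^{2i}_{2n+1})]$ with $M_1\cap E(G^i)=E_{2i-1}$; a forcing set must restrict to a forcing set on each $G^i$, and $f(G^i,E_{2i-1})=n+1$ because the $2n+1$ alternating square faces of the prism impose a vertex-cover condition on the odd cycle $C_{2n+1}$. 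Note this is a hitting-set bound, strictly stronger than counting disjoint alternating cycles (which would give only $n$ per prism) --- exactly the slack your approach cannot recover.

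The upper bound is also not a proof as written. Your primary route (``select $n+1$ edges per column pair, chosen according to the local structure of $M$, so that the rest is forced'') specifies neither the selection rule nor why the remaining edges are determined; since edges of an arbitrary $M$ cross freely between your column pairs, forcing ``within each pair'' does not imply forcing in $G$, and this is where all the work lies. Your alternative route via Lemma \ref{l3} fails for the reason you yourself note: flips change the forcing number, and a forcing set of $M'$ in the cylinder $T(2n+1,2m)-E_i$ need not be forcing in the torus, which has additional alternating cycles through $E_i$. The paper's actual argument is a single global construction valid for every $M$: an independent set $T$ of $mn$ marked vertices (pattern $X_1\cup Y_3\cup X_5\cup\cdots$), for which the $mn$ edges $E_T\subseteq M$ covering $T$ span a subgraph with no $M$-alternating cycle --- shown by a parity argument that any cycle through the marked pattern encloses an odd number of vertices --- so $M\setminus E_T$ is a forcing set of size $(2n+1)m-mn=(n+1)m$. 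You would need to supply an argument of comparable global strength.
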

\begin{proof}
Let $G=T(2n+1,2m)$. First we prove that $F(G)\leq (n+1)m$.

For $1\leq i\leq 2m$, let $X_i=\{(u_i,v_j)|j=1,3,\dots, 2n-1\}$ and $Y_i=\{(u_i,v_j)|j=2,4,\dots, 2n\}$. Let $k=\lfloor\frac{m}{2}\rfloor$. If $m=2k$, then we take $$T=X_1\cup X_5\cup \cdots \cup X_{4(k-1)+1}\cup Y_3\cup Y_7\cup \cdots \cup Y_{4(k-1)+3}.$$
If $m=2k+1$, then we take $$T=X_1\cup X_5\cup \cdots \cup X_{4(k-1)+1}\cup Y_3\cup Y_7\cup \cdots \cup Y_{4(k-1)+3}\cup Y_{4k+1}.$$
So $T$ is of size $mn$ and we call $T$ \emph{marked vertices} shown in Fig. \ref{markedvertices}.
\begin{figure}[h]
\centering
\includegraphics[height=4.7cm,width=14cm]{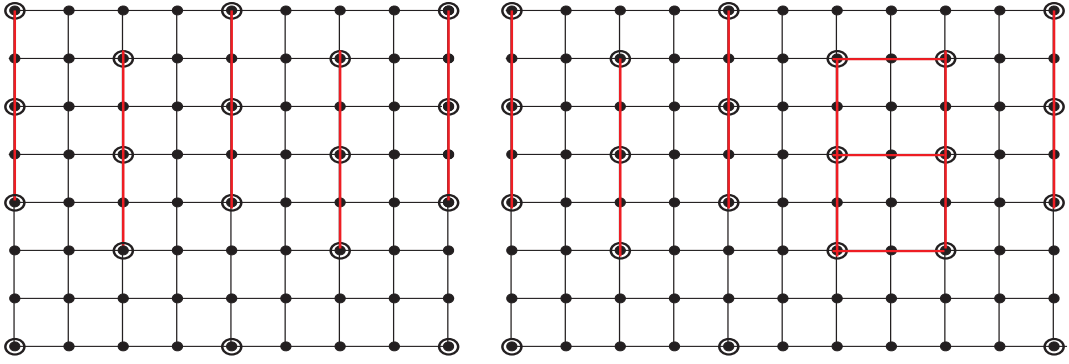}
\caption{\label{markedvertices}Marked vertices of $T(7,8)$ (left) and $T(7,10)$ (right).}
\end{figure}
For any perfect matching $M$ of $G$, we can produce a subset $E_T\subseteq M$ in which an edge lies in $E_T$ if one of its end vertices lies in $T$. Since $M$ is a matching and $T$ is an independent set, we have $|E_T|=|T|$.

We claim that $G[V(E_T)]$ contains no $M$-alternating cycles. Suppose to the contrary that $G[V(E_T)]$ contains an $M$-alternating cycle $C$. Since $T$ is an independent set, there is exactly half vertices of $C$ in $T$, and marked vertices and unmarked vertices appear alternately on $C$. So we can treat $C$ as a union of paths of length two whose two end vertices are marked vertices.
Let $G'$ be a subgraph of $G$ consisting of all paths of length two between marked vertices. Then each component of $G'$ is either a path with $2n-1$ vertices or a plane bipartite graph with $5n-2$ vertices shown in Fig. \ref{markedvertices} as bold lines.
Since $C$ is a cycle of $G'$, $m$ is odd and $C$ is contained in the component of plane bipartite graph. For each cycle $C'$ of $G'$, $C'$ is also a cycle of $G$. Since there is always an odd number of vertices in $G$ sequestered inside $C'$, $C'$ cannot be $M$-alternating, a contradiction.
By the claim, $G[V(E_T)]$ has a unique perfect matching, and $M\setminus E_T$ is a forcing set of $M$.
Since $f(G,M)\leq |M\setminus E_T|=(n+1)m,$ we have $F(G)\leq (n+1)m$ by the arbitrariness of $M$.

It suffices to prove that $f(G,M_1)\geq (n+1)m$ where $M_1=E_1\cup E_3\cup \cdots \cup E_{2m-1}$.
For $1\leq i\leq m$, let $G^i=G[V(C^{2i-1}_{2n+1})\cup V(C^{2i}_{2n+1})]$ and $M^i=M\cap E(G^i)=E_{2i-1}$. Then $M^i$ is a perfect matching of $G^i$, and it is clear that $f(G^i,M^i)=n+1$. Thus, we obtain that $f(G,M_1)\geq\sum_{i=1}^mf(G^i,M^i)=(n+1)m.$
\end{proof}

\begin{cor}\label{c2}For $n\geq 1$ and $m\geq 2$, $Spec(T(2n+1,2m))$ is continuous.
\end{cor}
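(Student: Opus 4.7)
The plan is to combine Lemma \ref{4.3} with the component structure of $R_t(T(2n+1,2m))$ established in Theorems \ref{t2} and \ref{t3}. Since $T(2n+1,2m)$ is a quadriculated torus, every facial cycle has length $4$, so every edge of $R_t(T(2n+1,2m))$ corresponds to a flip along an $M$-alternating $4$-cycle. Hence Lemma \ref{4.3} says that whenever $M$ and $M'$ are adjacent in $R_t(T(2n+1,2m))$, $|f(G,M)-f(G,M')|\leq 1$, i.e.\ the function $f(G,\cdot)$ is $1$-Lipschitz along edges of $R_t(T(2n+1,2m))$.

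Next I would argue that the forcing spectrum of each of the two components $H_1$ and $H_2$ is an integer interval. Fix a component $H$ and let $a=\min_{M\in V(H)}f(G,M)$, $b=\max_{M\in V(H)}f(G,M)$, realized by matchings $M_a,M_b\in V(H)$. Since $H$ is connected (by Theorem \ref{t2} each component is connected), there is a path $M_a=N^0,N^1,\dots,N^r=M_b$ in $H$. The sequence $f(G,N^0),f(G,N^1),\dots,f(G,N^r)$ is a sequence of integers whose consecutive terms differ by at most $1$, and which starts at $a$ and ends at $b$. Hence it assumes every integer value in $[a,b]$, so every such value lies in the spectrum contributed by $H$.

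Finally, I would use Corollary \ref{4.5} to show the spectra of the two components coincide. The automorphism $\varphi$ of $T(2n+1,2m)$ used in Theorem \ref{t3} induces the isomorphism $H_1\to H_2$; because $\varphi$ is a graph automorphism of $G$, it sends forcing sets of any perfect matching $M$ bijectively to forcing sets of $\varphi(M)$, so $f(G,M)=f(G,\varphi(M))$ for every $M\in V(H_1)$. Therefore the set of forcing numbers attained on $V(H_1)$ equals the set attained on $V(H_2)$, both equal to the same integer interval $[a,b]$, and $\mathrm{Spec}(T(2n+1,2m))=[a,b]$ is continuous.

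I do not expect a serious obstacle: the ingredients (connectedness of each component, quadrilateral faces so that Lemma \ref{4.3} applies, and the automorphism equating the two components) are already in hand. The only point that requires a line of care is the observation that a graph automorphism preserves the forcing number, which is immediate from the definition since $\varphi$ maps perfect matchings to perfect matchings and subsets to subsets of the same size, so $S$ is a forcing set of $M$ iff $\varphi(S)$ is a forcing set of $\varphi(M)$.
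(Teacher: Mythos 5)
Your proposal is correct. The first half --- that the forcing spectrum of each component $H_i$ is an integer interval, because every face of the torus is a quadrilateral, so Lemma \ref{4.3} makes $f(G,\cdot)$ change by at most $1$ along each edge of $R_t(T(2n+1,2m))$, and a discrete intermediate-value argument along a path in the connected component finishes it --- is exactly the paper's argument. Where you diverge is in how the two intervals are combined into one. The paper notes that $M_1$ and $M_2$ (the representatives of $H_1$ and $H_2$) both attain the maximum forcing number $(n+1)m$ computed in Theorem \ref{Max1}, so the two intervals share their right endpoint and their union is again an interval. You instead invoke Corollary \ref{4.5}: the translation $\varphi$ is an automorphism of the torus carrying $H_1$ onto $H_2$, and an automorphism of $G$ sends forcing sets of $M$ to forcing sets of $\varphi(M)$, so the two components have \emph{identical} spectra. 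Your route is slightly more self-contained --- it needs no knowledge of the numerical value $(n+1)m$ and does not depend on Theorem \ref{Max1} --- and it also makes explicit the fact (only implicit in the paper) that $f(G,M_2)=f(G,M_1)$ because $\varphi(M_1)=M_2$. The paper's route is shorter given that Theorem \ref{Max1} is already in hand. Both arguments are sound.
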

\begin{proof}Since any two perfect matchings of the same component of $R_t(T(2n+1,2m))$ can be obtained from each other by some flips, by Lemma \ref{4.3}, the forcing numbers of  perfect matchings in the same component form an integer interval. By Theorem \ref{t2}, $R_t(T(2n+1,2m))$ has exactly two components containing $M_1$ and $M_2$ separately. Since both $f(T(2n+1,2m),M_1)$ and $f(T(2n+1,2m),M_2)$ equal the maximum forcing number $(n+1)m$, $\text{Spec}(T(2n+1,2m))$ is an integer interval.
\end{proof}

By Theorem \ref{Max1} and Corollary \ref{c2}, we obtain the following result.

\begin{cor} For $n\geq 1$ and $m\geq 2$, the forcing numbers of all tilings of $T(2n+1,2m)$ form an integer interval whose maximum value is $(n+1)m$.
\end{cor}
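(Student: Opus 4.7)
The plan is to reduce the statement about tilings to the corresponding statement about perfect matchings of the inner dual, which has already been established in Theorem \ref{Max1} and Corollary \ref{c2}. Recall from the introduction that the inner dual of the $2m \times (2n+1)$ quadriculated torus is $T(2n+1,2m)$ itself, and under this correspondence a domino tiling maps bijectively to a perfect matching, while a flip of a pair of side-by-side dominoes corresponds to a rotation along a facial $4$-cycle. Crucially, this bijection also preserves the notion of being ``contained in no other tiling/matching,'' so the forcing number of a tiling $t$ equals the forcing number $f(T(2n+1,2m), M_t)$ of the associated perfect matching $M_t$. Consequently, the forcing spectrum of tilings of the $2m \times (2n+1)$ torus coincides with $\mathrm{Spec}(T(2n+1,2m))$.

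With that identification in hand, the proof is a one-line assembly: Corollary \ref{c2} asserts that $\mathrm{Spec}(T(2n+1,2m))$ is an integer interval, and Theorem \ref{Max1} identifies its maximum as $(n+1)m$. Together these immediately yield the claim. The only thing worth spelling out explicitly is that the correspondence tiling~$\leftrightarrow$~perfect matching takes forcing sets of tilings to forcing sets of matchings and vice versa; this is a direct unraveling of definitions, since a domino is forced within a tiling precisely when its dual edge is forced within the corresponding matching, and both notions refer to uniqueness of completion.

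The substantive content has been done upstream, so there is no real obstacle to overcome in this final corollary; the work lies in the results it invokes. The maximum value $(n+1)m$ rests on the marked-vertex construction of Theorem \ref{Max1}, which gives the upper bound via a carefully chosen independent set whose induced subgraph admits no alternating cycle for any perfect matching, together with the matching lower bound witnessed by $M_1 = E_1 \cup E_3 \cup \cdots \cup E_{2m-1}$. The continuity of the spectrum rests on Corollary \ref{c2}, which in turn depends on Theorem \ref{t2} (exactly two components of the resonance graph) and on Lemma \ref{4.3} (a flip changes the forcing number by at most $1$), plus the observation that both ``extremal'' matchings $M_1$ and $M_2$ attain the maximum $(n+1)m$, so that the two component-wise intervals share their right endpoint and thus their union is again an interval. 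Nothing more is needed for the final corollary.
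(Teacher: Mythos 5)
Your proposal is correct and follows essentially the same route as the paper: identify tilings of the $2m\times(2n+1)$ quadriculated torus with perfect matchings of its inner dual $T(2n+1,2m)$ (which is the torus graph itself), note that this correspondence preserves forcing sets, and then combine Theorem \ref{Max1} (maximum forcing number $(n+1)m$) with Corollary \ref{c2} (continuity of the spectrum). Your additional remark that the two components' intervals share the common right endpoint $(n+1)m$ is exactly the observation underlying the paper's proof of Corollary \ref{c2}, so nothing is missing.
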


Riddle \cite{7} and Kleinerman \cite{16} obtained separately that $f(T(2m,2n))=2\text{min}\{m,n\}$ and $F(T(2m,2n))=mn$. But the forcing spectrum of $T(2m,2n)$ is not necessarily continuous. For example, we obtain that $\text{Spec}(T(4,10))=\{4,6,7,8,9,10\}$ by calculating forcing numbers of all 537636 perfect matchings by computer, which has one gap 5.

\subsection{\normalsize The forcing numbers of domino tilings of  $C(2m,2n+1)$}
In this subsection, we prove that the forcing spectrum of $C(2m-1,2n+1)$ is continuous. As a corollary, we obtain that the forcing numbers of all domino tilings of $C(2m,2n+1)$ form an integer interval. Last we give some results about the minimum forcing numbers of special quadriculated cylinders.

\begin{cor}\label{2.2.6} For $n\geq 1$ and $m\geq 1$, $\text{Spec}(C(2m-1,2n+1))$ is continuous.
\end{cor}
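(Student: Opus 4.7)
The plan is to derive the continuity of $\mathrm{Spec}(C(2m-1,2n+1))$ directly from the connectedness of its total resonance graph. By Corollary \ref{thm1}, $R_t(C(2m-1,2n+1))$ is connected, so for any two perfect matchings $M$ and $M'$ there is a sequence $M=M^{0},M^{1},\dots,M^{t}=M'$ with $M^{i+1}=M^{i}\oplus S_{i}$, where each $S_{i}$ is a facial cycle of $C(2m-1,2n+1)$.

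Next I would check that Lemma \ref{4.3} applies to every step of this sequence, i.e.\ that each $S_{i}$ is in fact a $4$-cycle. Viewed as a plane graph, $C(2m-1,2n+1)$ has two odd faces of size $2n+1$ coming from the two open ends of the cylinder, while all remaining interior faces are squares (see Fig.\ \ref{D2}). Since an odd cycle cannot be $M$-alternating, neither odd face can occur as some $S_{i}$, so every $S_{i}$ is indeed a $4$-cycle. Applying Lemma \ref{4.3} to each consecutive pair then gives
\[
|f(C(2m-1,2n+1),M^{i+1})-f(C(2m-1,2n+1),M^{i})|\le 1
\]
for $0\le i\le t-1$.

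The conclusion then follows from a standard discrete intermediate-value argument: any finite sequence of integers whose consecutive differences are at most $1$ in absolute value attains every integer value between its first and last terms. Applying this to matchings $M_{\min}$ and $M_{\max}$ realizing the minimum and maximum forcing numbers of $C(2m-1,2n+1)$, we conclude that every integer in $[f(C(2m-1,2n+1)),\,F(C(2m-1,2n+1))]$ appears as the forcing number of some perfect matching, so $\mathrm{Spec}(C(2m-1,2n+1))$ is a continuous integer interval. I do not anticipate a genuine obstacle here; the only point requiring attention is the verification that odd faces play no role in $R_t$, and this is immediate from the parity observation above.
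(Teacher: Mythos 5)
Your proposal is correct and follows essentially the same route as the paper: connectedness of $R_t(C(2m-1,2n+1))$ from Corollary \ref{thm1}, Lemma \ref{4.3} applied along a path of flips, and a discrete intermediate-value argument. Your explicit check that the two odd faces of size $2n+1$ can never be $M$-alternating (so every flip is along a $4$-cycle and Lemma \ref{4.3} genuinely applies) is a detail the paper leaves implicit, but it does not change the argument.
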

\begin{proof}By Corollary \ref{thm1}, $R_t(C(2m-1,2n+1))$ is connected. Then any two perfect matchings of $C(2m-1,2n+1)$ can be obtained from each other by a series of flips.
By Lemma \ref{4.3}, a flip cannot change the forcing number by more than one. So the forcing spectrum of $C(2m-1,2n+1)$ form an integer interval.
\end{proof}

Recall that $F(C(2m-1,2n+1))=(n+1)m$ \cite{29}. Combining Corollary \ref{2.2.6}, we obtain the following result.
\begin{cor} For $n\geq 1$ and $m\geq 1$, the forcing numbers of all tilings of  $C(2m,2n+1)$ form an integer interval whose maximum value is $(n+1)m$.
\end{cor}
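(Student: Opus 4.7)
The plan is to translate the problem from the tiling side to the perfect matching side via the inner dual correspondence, and then to combine the two ingredients that have already been set up earlier in the excerpt. Recall from the introduction that, in general, a quadriculated region $D$ has a graph-theoretic inner dual $D^{\#}$: domino tilings of $D$ are in natural bijection with perfect matchings of $D^{\#}$. Under this bijection, each domino corresponds to exactly one edge of $D^{\#}$, so being forced (i.e.\ lying in no other tiling) is preserved, and hence the forcing number of a tiling of $D$ equals the forcing number of the corresponding perfect matching of $D^{\#}$. Since the paper has already observed that the inner dual of the quadriculated cylinder $C(2m,2n+1)$ is the graph $C(2m-1,2n+1)$, this shows
\[
\{\,f(t) : t \text{ tiling of } C(2m,2n+1)\,\} \;=\; \mathrm{Spec}(C(2m-1,2n+1)).
\]

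With this translation in hand, the rest is assembling two facts already established. First, by Corollary~\ref{2.2.6}, $\mathrm{Spec}(C(2m-1,2n+1))$ is an integer interval. Second, by the result of Jiang and Zhang cited just before the statement, $F(C(2m-1,2n+1))=(n+1)m$, which identifies the right endpoint of this interval. Together these give that the forcing numbers of all tilings of $C(2m,2n+1)$ form an integer interval with maximum value $(n+1)m$.

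Accordingly, the proof would consist of three short steps: (i) invoke the tiling/perfect-matching correspondence to reduce to a statement about $\mathrm{Spec}(C(2m-1,2n+1))$; (ii) cite Corollary~\ref{2.2.6} for continuity of this spectrum; (iii) cite the known value of the maximum forcing number to name the upper endpoint. There is no real obstacle here, since the main work (connectedness of the resonance graph, the forcing-change-by-at-most-one lemma, and the determination of the maximum forcing number) has already been done. The only point requiring a sentence of explanation, rather than a bare citation, is the preservation of forcing numbers under the tiling-to-matching bijection, which follows immediately from the fact that the bijection is between dominoes and edges and is functorial with respect to restriction to subsets.
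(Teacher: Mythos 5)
Your proposal is correct and follows essentially the same route as the paper: the paper also reduces to $\mathrm{Spec}(C(2m-1,2n+1))$ via the inner-dual correspondence (noting at the start of Section~5 that the forcing number of a tiling equals that of the corresponding perfect matching), then combines Corollary~\ref{2.2.6} with the Jiang--Zhang value $F(C(2m-1,2n+1))=(n+1)m$. Your extra remark justifying why the bijection preserves forcing numbers is a reasonable elaboration of what the paper takes for granted.
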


\begin{remark} For $n\geq 1$ and $m\geq 1$, $\text{Spec}(C(2m-1,2n))$ is not necessarily continuous.
For example, we obtain that $\text{Spec}(C(1,10))=\{2,4,5\}$ and $\text{Spec}(C(1,12))=\{2,4,5,6\}$ by computer, both of which have one gap 3.
\end{remark}

For forcing numbers of perfect matchings of $C(2m-1,2n+1)$, there is a remaining problem as follows.
\begin{pb}What is $f(C(2m-1,2n+1))$ for $n\geq 1$ and $m\geq 1$?
\end{pb}

The minimum forcing numbers of some special quadriculated cylinders have been given (see the detail in Ref. \cite{Li} and we omit the proof here).

\begin{itemize}
\item $f(C(1,5))=2$.
\item $f(C(1,2n+1))=\lceil\frac{2n+1}{3}\rceil$ for $n\geq 1$.
\item $f(C(3,2n+1))=\lceil\frac{4n+2}{5}\rceil$  for $n\geq 1$.
\item $f(C(2m-1,3))=m-\lfloor\frac{m}{3}\rfloor$ for $m\geq 1$.
\item $f(C(2m-1,5))=m$ for $m\geq 2$.
\end{itemize}

\noindent{\normalsize \textbf{Acknowledgments}}
\smallskip

This work was supported by National Natural Science Foundation of China (Grant No. 11871256). We thank Yaxian Zhang providing a simpler method for the proof of Lemma \ref{l3} during the discussion.

\end{document}